\newcommand{\conv}{ {\mathrm{conv}} }
\newcommand{\ext}{ {\mathrm{ext}} }
\theoremstyle{plain}
\newtheorem{theorem}{Theorem}[section]
\newtheorem{proposition}[theorem]{Proposition}
\theoremstyle{definition}
\newtheorem{definition}{Definition}
\title{A Geometric Approach to Combinatorial Fixed-Point Theorems}
\author{Elyot Grant\footnote{Computer Science and Artificial Intelligence Laboratory, Massachusetts Institute of Technology. \texttt{elyot@mit.edu}}, Will Ma\footnote{Operations Research Center, Massachusetts Institute of Technology. \texttt{willma@mit.edu}}}
\begin{document}

\maketitle

\vspace{-20pt}
\begin{abstract}
We develop a geometric framework that unifies several different combinatorial fixed-point theorems related to Tucker's lemma and Sperner's lemma, showing them to be different geometric manifestations of the same topological phenomena. In doing so, we obtain (1) new Tucker-like and Sperner-like fixed-point theorems involving an exponential-sized label set; (2) a generalization of Fan's parity proof of Tucker's Lemma to a much broader class of label sets; and (3) direct proofs of several Sperner-like lemmas from Tucker's lemma via explicit geometric embeddings, without the need for topological fixed-point theorems.  Our work naturally suggests several interesting open questions for future research.
\end{abstract}

\vspace{-10pt}
\section{Introduction}
Combinatorial fixed-point theorems such as the Sperner and Tucker lemmas have generated a wealth of interest in recent decades, in part due to the discovery of important new applications in economics and theoretical computer science (see \cite{Rah12,Yan09,Che09}).  Sperner's lemma is known to be equivalent to the celebrated Brouwer fixed-point theorem, of which it can be regarded as a discrete analogue.  A similar relationship holds between Tucker's lemma and the Borsuk-Ulam theorem---it is easy to show that both are equivalent, with Tucker's lemma effectively serving as a combinatorial version of the topological Borsuk-Ulam theorem (see \cite{Mat03}).

Extensive research has examined the construction of \emph{direct proofs} of the implications among these and other similar theorems (and generalizations), yielding many different proofs of the Sperner and Tucker lemmas via a variety of methods (see \cite{Fre84b, Kuh60, FT81}).  Some of this work has succeeded in connecting fixed-point theorems in the (Brouwer, Sperner)-family to the seemingly unrelated antipodality theorems in the (Borsuk-Ulam, Tucker)-family; for example, Su has shown that it is possible to prove the Brouwer fixed-point theorem directly from the Borsuk-Ulam theorem via an explicit topological construction \cite{Su97}, and \v{Z}ivalcevi\'c \cite{Ziv10} has shown how Ky Fan's \cite{Fan52} generalization of Tucker's lemma implies Sperner's lemma.  However, despite these results suggesting that Tucker's lemma is, in some sense, a ``stronger'' result than Sperner's lemma, the construction of a direct proof that Tucker's lemma implies Sperner's lemma appears to remain an open question \cite{NS12}.

To shed some light on this question, we investigate the Tucker and Sperner lemmas from a \emph{geometric} viewpoint.  Cast in this light, it becomes apparent that the Tucker and Sperner lemmas are actually members of a much larger family of combinatorial fixed-point theorems sharing a common topological structure, but having different geometric manifestations.  Our approach hence unifies many known combinatorial fixed-point theorems, and yields a new Tucker-type lemma and a new Sperner-type lemma, both with an exponential number of labels.  In doing so, we generalize the technique in \cite{Fan52} to obtain a framework that proves our new Tucker-like theorem without any topological fixed-point theorems (\textbf{Section 3}).  As a bonus, our framework also permits us to prove some of the Sperner-like theorems directly from Tucker's lemma via explicit geometric embeddings (\textbf{Section 2}).  Moreover, we derive some insight into why Sperner's lemma may be difficult to prove directly from Tucker's lemma---the analogy between Borsuk-Ulam and Tucker results is geometrically different from the analogy between Brouwer and Sperner results, and alternate Sperner-like theorems provide a more direct analogy.

\

\noindent \emph{Acknowledgements}. We thank Rob Freund for helpful discussions, and referee 2 for several useful comments.  Both authors were partially supported by NSERC PGS-D awards.  The second author was supported in part by NSF grant CCF-1115849 and ONR grants N00014-11-1-0053 and N00014-11-1-0056.

\subsection{Preliminaries}

We begin by briefly outlining our terminology and notation.  For more background, we refer the reader to Matou\v{s}ek \cite{Mat03}.  A set $\sigma \subset \mathbb{R}^n$ is a \emph{$k$-dimensional simplex} (or $k$-simplex) if it is the convex hull $\conv(A)$ of a set $A$ of $k+1$ affinely independent points.  Points in $A$ are termed \emph{vertices} of $\sigma$, denoted $V(\sigma)$.  Simplices of the form $\conv(B)$ where $B \subseteq A$ are known as \emph{faces} of $\sigma$.  We write $e_i$ for the $i^{\mathrm{th}}$ standard basis vector in $\mathbb{R}^n$.
\begin{definition}
For each $n \geq 1$, we define the following subsets of $\mathbb{R}^n$:
\begin{itemize}
\item The \emph{$n$-dimensional unit ball} $B^n$ is defined as $\{x\in R^n, ||x||_2\le 1\}$.
\item The \emph{$(n-1)$-dimensional tetrahedron} $\Delta^{n-1}$ is defined as $\conv\{e_i:1 \leq i \leq n\}$, or equivalently, can be defined as the set of all points $(x_1,\ldots,x_n)$ such that $\sum_{i=1}^n{x_i} = 1$ and $x_i \geq 0$ for all $i$.
\item The \emph{$n$-dimensional octahedron} $\Diamond^n$ is defined as $\conv\{\pm e_i\}$, or equivalently, is the unit ball $\{x\in R^n, ||x||_1\le 1\}$ in the $\ell_1$ norm.  This is also known as the $n$-dimensional \emph{cross-polytope} or \emph{orthoplex}.
\item The \emph{$n$-dimensional cube} $\square^n$ is defined as $\conv\{(x_1, \ldots, x_n):x_i \in \{-1,1\} \}$, or equivalently, is the unit ball $\{x\in R^n, ||x||_\infty\le 1\}$ in the $\ell_\infty$ norm.
\end{itemize}
\end{definition}

Central to all combinatorial fixed-point theorems is the notion of triangulation:
\begin{definition}
If $X \subset \mathbb{R}^n$, a finite family $T$ of simplices is a \emph{triangulation of} $X$ if $\bigcup_{\sigma \in T}\sigma = X$ and the following two properties hold:
\begin{itemize}
\item For each simplex $\sigma \in T$, all faces of $\sigma$ are also simplices in $T$.
\item For any two simplices $\sigma_1,\sigma_2 \in T$, the intersection $\sigma_1 \cap \sigma_2$ is a face of both $\sigma_1$ and $\sigma_2$.
\end{itemize}
\end{definition}
In more general contexts, a \emph{topological triangulation} is defined to only require that $\bigcup_{\sigma \in T}\sigma$ is \emph{homeomorphic to} the set $X$.  This relaxed definition is necessary in order to discuss, e.g.\ finite triangulations of the unit ball.  For brevity, we will omit the word `topological' when the context is clear; when we say \emph{a triangulation of $B^n$}, we mean a triangulation of any set homeomorphic to $B^n$, such as $\Delta^{n}$, $\Diamond^n$, or $\square^n$.  In all other contexts, we will assume that $\bigcup_{\sigma \in T}\sigma = X$ when $T$ is a triangulation of $X$.

\medskip
\noindent
{\bf Combinatorial Fixed-Point Theorems}


We define the \emph{vertices} $V(T)$ of a triangulation $T$ to be the set of all 0-simplices in $T$.  A \emph{label function} $\lambda$ is a mapping from $V(T)$ to a finite \emph{label set} $L$.  In the case of the Tucker and Sperner lemmas, the sets $\{1,\ldots,n+1\}$ or $\{1,-1,\ldots,n,-n\}$ are typically used for $L$.  However, in our paper, we will instead represent these labels as the sets of \emph{extreme points} $\ext(\Delta^{n}) = \{e_1,\ldots,e_{n+1}\}$ and $\ext(\Diamond^{n}) = \{e_1,-e_1\ldots,e_n,-e_n\}$.
Cast in this framework, we shall state Sperner's lemma as follows:
\begin{theorem}[Sperner's Lemma]\label{sperner}
Let $T$ be a triangulation of $\Delta^n$.  Let $\lambda:V(T)\rightarrow \ext(\Delta^n)$ be a label function with the property that for all $x = (x_1,\ldots,x_{n+1}) \in V(T)$, for all $1 \leq i \leq n+1$, if $x_i = 0$, then $\lambda(x)\neq e_i$ (such a $\lambda$ is sometimes called a \emph{proper colouring}).  Then $T$ contains a \emph{panchromatic simplex}---that is, a simplex $\sigma$ such that $\{\lambda(v):v \in V(\sigma)\} = \{e_1,\ldots,e_{n+1}\}$.
\end{theorem}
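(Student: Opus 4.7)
The plan is to prove Sperner's Lemma by induction on $n$ via the classical door-counting parity argument, strengthening the conclusion to the statement that the number of panchromatic $n$-simplices is \emph{odd}. The base case $n = 0$ is immediate: $\Delta^0$ is a single point that must be labeled $e_1$, giving exactly one panchromatic $0$-simplex.

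For the inductive step I would construct an auxiliary graph $G$ whose vertex set consists of the $n$-simplices of $T$ together with one extra ``outside'' vertex $v_\infty$. Two simplices $\sigma_1,\sigma_2 \in T$ are adjacent in $G$ iff they share an $(n-1)$-face whose vertices carry exactly the labels $\{e_1,\ldots,e_n\}$; the vertex $v_\infty$ is adjacent to each boundary $(n-1)$-simplex carrying this same label set. A direct case analysis then shows that a panchromatic $n$-simplex has exactly one such face (the one opposite its $e_{n+1}$-labeled vertex) and hence has degree $1$ in $G$, while any other $n$-simplex has degree $0$ (if some $e_i$ with $i \leq n$ is missing from its label set) or degree $2$ (if its label set is exactly $\{e_1,\ldots,e_n\}$, in which case one label is repeated and deleting either copy of the repeated vertex yields a qualifying $(n-1)$-face). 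Consequently, the panchromatic $n$-simplices are precisely the odd-degree vertices of $G$ other than $v_\infty$.

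To compute $\deg(v_\infty)$ I would use the properness of $\lambda$: on the face $F_i = \{x \in \Delta^n : x_i = 0\}$ the label $e_i$ is forbidden, so an $(n-1)$-simplex with label set $\{e_1,\ldots,e_n\}$ cannot lie on $F_i$ for any $i \leq n$. Thus every edge at $v_\infty$ lies in $F_{n+1}$, which is naturally identified with $\Delta^{n-1}$, and the restricted labeling is a proper coloring into $\{e_1,\ldots,e_n\}$. The inductive hypothesis then gives that $\deg(v_\infty)$ is odd. Applying the handshake lemma to $G$ yields an even total of odd-degree vertices, forcing an odd (and in particular nonzero) number of panchromatic $n$-simplices. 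The main obstacle is the boundary analysis in this last step: one must verify carefully that the proper-coloring hypothesis really does confine every ``door'' to $F_{n+1}$, since otherwise $\deg(v_\infty)$ is uncoupled from the inductive hypothesis and the parity argument collapses.
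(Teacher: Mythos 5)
Your proposal is correct: it is the classical combinatorial ``door-counting'' parity proof of Sperner's Lemma, and every step checks out. The degree analysis is right (a panchromatic $n$-simplex has exactly one $\{e_1,\ldots,e_n\}$-labelled facet, namely the one opposite the $e_{n+1}$ vertex; an $n$-simplex whose label multiset misses some $e_i$ with $i\le n$ has none; and an $n$-simplex with label set exactly $\{e_1,\ldots,e_n\}$ has one repeated label and hence exactly two such facets). The boundary confinement argument is also sound: any boundary $(n-1)$-simplex lies in some facet $F_i = \{x : x_i = 0\}$, and on $F_i$ the label $e_i$ is forbidden, so a $\{e_1,\ldots,e_n\}$-labelled boundary simplex cannot lie in $F_i$ for $i\le n$ and must lie in $F_{n+1}$, where the restricted labelling is a proper colouring of a copy of $\Delta^{n-1}$.

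Be aware, though, that the paper itself does not prove Theorem~\ref{sperner}: it states Sperner's Lemma as known background and only sketches the \emph{topological} route, via a Brouwer fixed-point argument that pushes all non-panchromatic simplices to the boundary of $\Delta^n$. Your proof is therefore genuinely different in method from what the paper gestures at---it is purely combinatorial and more elementary, requiring no topology, and it proves the stronger parity statement (odd count) rather than mere existence. It is also worth noting that your inductive parity argument is closely analogous in spirit to the paper's Section~3 ``Parity Proof Framework'' (a generalization of Ky Fan's technique), which the paper applies to Tucker-like theorems rather than to Sperner's Lemma itself; the common skeleton in both is a handshake-lemma count over faces plus an induction over the dimension of hemispheres or boundary facets.
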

The key idea in proving Sperner's lemma from the Brouwer fixed-point theorem is to use the labels to construct a mapping from $\Delta^n$ to itself that must contain a fixed point in its interior.  The mapping is designed such that all simplices that are \emph{not} panchromatic are mapped to the boundary of $\Delta^n$, and thus it follows that $T$ must contain a panchromatic simplex.  Nothing about the proof requires any specific properties of $\Delta^n$, so it is natural to consider what happens when we replace $\Delta^n$ with other spaces homeomorphic to it, such as $\Diamond^n$ or $\square^n$.  As it turns out, we can establish results similar to Sperner's lemma in other geometric spaces as long as we have suitable analogies of the notions of \emph{proper colouring} and \emph{panchromatic simplex}.  

The following propositions motivate the alternative notions of \emph{proper colouring} and \emph{panchromatic simplex} that we shall employ when considering different geometric manifestations of Sperner-like theorems:
\begin{proposition} \label{propcomp} Let $\sigma$ be a simplex and let $\lambda: V(\sigma) \rightarrow \ext(\Diamond^n)$ be a label function.  Define a \emph{complementary edge} to be two vertices $v_1, v_2 \in V(\sigma)$ with $\lambda(v_1) = -\lambda(v_2)$.  Then $\conv\{\lambda(v):v\in V(\sigma)\}$ intersects the interior of $\Diamond^n$ if and only if $\sigma$ contains a complementary edge.
\end{proposition}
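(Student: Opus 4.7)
The plan is a direct calculation with the $\ell_1$ norm, exploiting the fact that $\ext(\Diamond^n) = \{\pm e_1, \ldots, \pm e_n\}$ and that the interior of $\Diamond^n$ consists of points $x$ with $\|x\|_1 < 1$. Since every element of $\ext(\Diamond^n)$ has $\ell_1$ norm $1$, any convex combination of labels lies in $\Diamond^n$, so the only question is whether some convex combination achieves $\|x\|_1 < 1$. I will prove the two directions separately.

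For the easy direction, suppose $\sigma$ contains a complementary edge, i.e., there exist $v_1, v_2 \in V(\sigma)$ with $\lambda(v_1) = -\lambda(v_2)$. Then the midpoint $\tfrac{1}{2}\lambda(v_1) + \tfrac{1}{2}\lambda(v_2) = 0$ is a convex combination of labels, and $0$ clearly lies in the interior of $\Diamond^n$.

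For the other direction, I will argue the contrapositive: if $\sigma$ contains no complementary edge, every convex combination $y = \sum_{v \in V(\sigma)} \alpha_v \lambda(v)$ (with $\alpha_v \geq 0$ and $\sum_v \alpha_v = 1$) satisfies $\|y\|_1 = 1$, so $y$ lies on the boundary of $\Diamond^n$. Write each label as $\lambda(v) = \varepsilon(v)\, e_{i(v)}$ with $\varepsilon(v) \in \{+1,-1\}$ and $i(v) \in \{1,\ldots,n\}$. The $j$-th coordinate of $y$ is
\[
y_j = \sum_{v : i(v) = j} \varepsilon(v)\, \alpha_v.
\]
The no-complementary-edge hypothesis means that for each fixed $j$, all vertices $v$ with $i(v) = j$ share a common sign $\varepsilon(v)$, so no cancellation occurs and $|y_j| = \sum_{v : i(v) = j} \alpha_v$. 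Summing over $j$ partitions $V(\sigma)$ and yields $\|y\|_1 = \sum_v \alpha_v = 1$, completing the proof.

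There is no real obstacle here; the only thing to be careful about is the bookkeeping for the sign-cancellation argument, which is straightforward once one observes that the absence of a complementary edge is exactly the statement that the nonzero contributions to each coordinate all have the same sign.
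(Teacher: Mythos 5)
Your proof is correct. The paper states Proposition~\ref{propcomp} without proof, treating it as an elementary observation motivating the definition of a complementary edge; your argument supplies exactly the clean $\ell_1$-norm calculation that one would expect to be implicit there. The two directions are handled properly: the forward direction produces $0$ as a midpoint, and the contrapositive correctly observes that $\conv\{\lambda(v)\}$ always lies inside $\Diamond^n$ (since $\Diamond^n$ is convex and contains all labels), so the only issue is whether a convex combination can have $\ell_1$-norm strictly less than $1$, which requires sign cancellation in some coordinate, which in turn requires a complementary edge. No gaps.
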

\begin{proposition} \label{propneutral}
Let $\sigma$ be a simplex and let $\lambda: V(\sigma) \rightarrow \ext(\square^n)$ be a label function.  We say $\sigma$ is a \emph{neutral simplex} if for all $1 \leq i \leq n$, there exist vertices $v_1, v_2 \in  V(\sigma)$ such that $\lambda_i(v_1)=-1$, $\lambda_i(v_2)=+1$, where $\lambda_i(v)$ is the $i^{\mathrm{th}}$ coordinate of $\lambda(v)$.  Then $\conv\{\lambda(v):v\in V(\sigma)\}$ intersects the interior of $\square^n$ if and only if $\sigma$ is a neutral simplex.
\end{proposition}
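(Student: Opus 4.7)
The plan is to prove the two directions separately, handling the easy direction by contrapositive and the other by explicitly exhibiting an interior point.

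For the forward direction, I argue by contrapositive. Suppose $\sigma$ is not a neutral simplex. Then there exists some coordinate $i$ such that, say, $\lambda_i(v) = +1$ for every $v \in V(\sigma)$ (the case $\lambda_i(v) = -1$ is symmetric). Any convex combination of these labels then also has $i$-th coordinate equal to $+1$, so $\conv\{\lambda(v) : v \in V(\sigma)\}$ is contained in the hyperplane $\{x \in \mathbb{R}^n : x_i = 1\}$. Since this hyperplane meets $\square^n$ only in the facet $\{x \in \square^n : x_i = 1\}$, which lies entirely in the boundary of $\square^n$, the convex hull cannot intersect the interior.

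For the reverse direction, assume $\sigma$ is neutral, and let $S = \{\lambda(v) : v \in V(\sigma)\} \subseteq \ext(\square^n)$ be the set of distinct labels used. I claim the centroid
\[
\bar{s} \;=\; \frac{1}{|S|}\sum_{s \in S} s
\]
lies in the interior of $\square^n$; since $\bar{s}$ is a convex combination of the $\lambda(v)$, this establishes the desired intersection. For each coordinate $1 \leq i \leq n$, neutrality of $\sigma$ guarantees at least one $s \in S$ with $s_i = +1$ and at least one $s' \in S$ with $s'_i = -1$. Every other entry contributes a value in $\{-1,+1\}$, so the $i$-th coordinate of $\sum_{s \in S} s$ is an integer in the interval $[-|S|+2,\, |S|-2]$, giving
\[
\bar{s}_i \;\in\; \left[-1 + \tfrac{2}{|S|},\; 1 - \tfrac{2}{|S|}\right] \;\subset\; (-1, 1).
\]
Thus $|\bar{s}_i| < 1$ for every $i$, i.e., $\bar{s}$ lies in the interior of $\square^n$.

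There is not really a hard step here; the main thing to be careful about is simply to work with the set of \emph{distinct} labels in the reverse direction, so that the averaging argument cleanly yields a strict inequality in every coordinate. The analogous proof of Proposition \ref{propcomp} would proceed similarly: failure to contain a complementary edge forces the labels to lie in a common facet of $\Diamond^n$ (namely, a facet of the form $\conv\{\varepsilon_i e_i : 1 \leq i \leq n\}$ for a fixed sign pattern $\varepsilon \in \{-1,+1\}^n$), while the presence of a complementary edge $v_1, v_2$ immediately gives the interior point $\tfrac{1}{2}(\lambda(v_1) + \lambda(v_2)) = 0$.
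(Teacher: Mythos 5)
The paper states Proposition~\ref{propneutral} without providing a proof (it is offered only as motivation for the definition of a neutral simplex), so there is no in-text argument to compare against. Your proof is correct: the contrapositive in the forward direction cleanly localizes the convex hull to a facet of $\square^n$, and the centroid computation in the reverse direction gives a strict bound $|\bar{s}_i| \le 1 - 2/|S| < 1$ in every coordinate, which is exactly what is needed; the observation that one should average over the \emph{distinct} labels (rather than the possibly repeating multiset) is the right way to make the bound clean. Your sketch of the analogous argument for Proposition~\ref{propcomp} is also accurate.
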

As a slight abuse of definitions, we shall say that $T$ has a complementary edge whenever some simplex in $T$ does.

Two Sperner-like theorems can immediately be derived by imposing the right labelling constraints:
\begin{theorem}[Octahedral Sperner with Octahedral Labels]\label{thspoctoct}
Let $T$ be a triangulation of $\Diamond^n$.  Let $\lambda:V(T)\rightarrow \ext(\Diamond^n)$ be a label function such that for all boundary vertices $x = (x_1,\ldots,x_n) \in V(T) \cap \partial(\Diamond^n)$, for all $1 \leq i \leq n$, if $x_i \ge 0$ (respectively, if $x_i \le 0$), then $\lambda(x)\neq -e_i$ (respectively, $\lambda(x)\neq e_i$).
Then $T$ contains a complementary edge.
\end{theorem}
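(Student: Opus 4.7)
The plan is to mimic the classical derivation of Sperner's lemma from Brouwer's fixed-point theorem, adapted to the octahedral geometry, with Proposition~\ref{propcomp} serving as the bridge between ``complementary edge'' and ``interior point of $\Diamond^n$ in the image of some simplex.''

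First, I would extend $\lambda$ to the continuous piecewise-linear map $\Lambda:\Diamond^n\to\Diamond^n$ defined barycentrically on each simplex $\sigma\in T$, i.e.\ $\Lambda(x)=\sum_{v\in V(\sigma)}\alpha_v\lambda(v)$ whenever $x=\sum_v\alpha_v v$. Assuming for contradiction that $T$ contains no complementary edge, Proposition~\ref{propcomp} gives $\Lambda(\sigma)=\conv\{\lambda(v):v\in V(\sigma)\}\subseteq\partial\Diamond^n$ for every $\sigma$, hence $\Lambda(\Diamond^n)\subseteq\partial\Diamond^n$. Applying Brouwer's theorem to the continuous self-map $-\Lambda$ of $\Diamond^n$ then produces a point $x^*$ with $\Lambda(x^*)=-x^*$; since $\Lambda(x^*)\in\partial\Diamond^n$, this $x^*$ must also lie on $\partial\Diamond^n$.

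The main work is to contradict the existence of such an $x^*$ using the boundary labelling hypothesis. Fix a sign vector $s\in\{-1,+1\}^n$ with $x^*$ on the facet $F_s=\conv\{s_1 e_1,\ldots,s_n e_n\}$, i.e.\ $\sum_i s_i x^*_i=1$, and let $\sigma$ be the unique simplex of $T$ whose relative interior contains $x^*$. A short convexity argument on the linear functional $y\mapsto\sum_i s_i y_i$ (which attains its maximum $1$ on $\Diamond^n$ precisely on $F_s$) shows that every vertex of $\sigma$ already lies on $F_s$, so every nonzero coordinate of such a vertex has sign matching $s$. The boundary hypothesis then forces each label to take the form $s_j e_j$: any coordinate with $v_i\ge 0$ rules out $\lambda(v)=-e_i$, and any coordinate with $v_i\le 0$ rules out $\lambda(v)=e_i$, so any label $\pm e_j$ must have its sign matching $s_j$. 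Consequently $\Lambda(x^*)\in F_s$, giving $\sum_i s_i\Lambda(x^*)_i=1$; but $\Lambda(x^*)=-x^*$ together with $x^*\in F_s$ forces $\sum_i s_i\Lambda(x^*)_i=-\sum_i s_i x^*_i=-1$, a contradiction.

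I expect the compatibility step above---showing that on the smallest simplex of $T$ containing a boundary point, every label is forced into the vertex set of a single facet---to be the only non-routine ingredient. Given that, the rest is the familiar ``extend labels linearly and apply Brouwer'' manoeuvre, with ``interior of $\Diamond^n$'' and ``complementary edge'' playing the roles that ``interior of $\Delta^n$'' and ``panchromatic simplex'' play in the analogous Sperner argument.
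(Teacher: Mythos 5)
Your argument is correct, but it takes a genuinely different route from the paper. The paper proves Theorem~\ref{thspoctoct} (its Theorem~\ref{dilligaf}) by \emph{geometric embedding into Tucker's lemma}: it dilates $\Diamond^n$ to $2\Diamond^n$, triangulates the annular region so that every new vertex lies in a single orthant (guaranteeing no new complementary edges), extends the labelling antipodally on the outer boundary, and then invokes Tucker's lemma on the enlarged triangulation. You instead run the classical \emph{Sperner-from-Brouwer} argument adapted to octahedral geometry: extend $\lambda$ to a PL self-map $\Lambda$, observe via Proposition~\ref{propcomp} that the ``no complementary edge'' assumption forces $\Lambda(\Diamond^n)\subseteq\partial\Diamond^n$, apply Brouwer to $-\Lambda$ to produce $x^*$ with $\Lambda(x^*)=-x^*$, and then use the boundary labelling constraint to show that on the carrier simplex of $x^*$ (whose vertices all lie on the facet $F_s$ by a supporting-hyperplane argument) every label lies in $\{s_je_j\}$, so $\Lambda(x^*)\in F_s$ while $-x^*\in -F_s$ --- giving $1=-1$. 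Both proofs are sound. What the paper's approach buys is precisely its advertised point: the result reduces to Tucker's lemma, which has purely combinatorial proofs, and the embedding technique is reusable for the other Sperner/Tucker conversions in Section~2. What your approach buys is brevity and directness --- it is essentially the same pattern the paper itself uses for Theorem~\ref{thspoctcub} in Section~4 --- at the cost of invoking the topological Brouwer theorem rather than staying combinatorial. One small presentational note: when you say the boundary hypothesis forces ``any label $\pm e_j$ must have its sign matching $s_j$,'' this only applies to coordinates $j$ with $v_j\neq 0$; the cleaner statement is that for $v\in F_s$ the constraint rules out every $-s_ie_i$, so $\lambda(v)\in\{s_je_j:j\}=V(F_s)$, which is exactly what you use.
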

\begin{theorem}[Cubical Sperner with Cubical Labels]\label{thspcubcub}
Let $T$ be a triangulation of $\square^n$.  Let $\lambda:V(T)\rightarrow \ext(\square^n)$ be a label function such that for all vertices $x = (x_1,\ldots,x_n) \in V(T)$, for all $1 \leq i \leq n$, if $x_i \in \{-1,1\}$, then $\lambda(x)_i = x_i$.  Then $T$ contains a neutral simplex.
\end{theorem}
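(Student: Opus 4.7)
My plan is to deduce this from a Brouwer-style surjectivity argument, using the geometric characterization in Proposition \ref{propneutral} as the bridge between the combinatorial conclusion and topology. Define a continuous map $f:\square^n \to \square^n$ by setting $f(v):=\lambda(v)$ for every $v \in V(T)$ and extending affinely across each simplex. For any simplex $\sigma \in T$, the image $f(\sigma)$ is exactly $\conv\{\lambda(v):v\in V(\sigma)\}$, so by Proposition \ref{propneutral}, $\sigma$ is neutral iff $f(\sigma)$ meets the interior of $\square^n$. It therefore suffices to show that $f$ is surjective; in fact the single point $0 \in f(\square^n)$ already delivers a neutral simplex.

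The boundary hypothesis forces $f$ to preserve each facet of $\square^n$. Indeed, if $x$ lies in the facet $\{x_i=1\}$, then in its containing simplex $\sigma$ every vertex $v$ appearing with positive weight must satisfy $v_i=1$ (any $v$ with $v_i<1$ would pull the $i$-th coordinate of the combination strictly below $1$); by hypothesis each such vertex has $\lambda(v)_i=1$, so by linearity $f(x)_i=1$. The analogous statement holds on $\{x_i=-1\}$, so $f$ restricts to a self-map of $\partial \square^n$ that carries each facet into itself.

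Since each facet is convex, the straight-line homotopy $H_t(x):=(1-t)x+tf(x)$ stays inside whichever collection of facets contains $x$, and thus provides a homotopy through self-maps of $\partial \square^n$ from the identity to $f|_{\partial \square^n}$. Hence $f|_{\partial \square^n}$ has degree $1$; a standard degree argument (if $p \notin f(\square^n)$, then $f$ factors through a space homotopy-equivalent to $\partial \square^n$, making $f|_{\partial \square^n}$ null-homotopic, a contradiction) implies that $f$ is surjective. Pulling back $0$ then yields a simplex $\sigma$ with $0 \in \conv\{\lambda(v):v \in V(\sigma)\}$, which by Proposition \ref{propneutral} is neutral.

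The only nontrivial step is the topological input in the last paragraph; this is essentially unavoidable because the theorem itself is a combinatorial fixed-point result. Consistent with the paper's philosophy, an alternative would be to invoke Sperner's Lemma (equivalent to Brouwer) in place of degree theory, or to seek an explicit geometric embedding reducing this statement to Tucker's Lemma; the main obstacle in the latter direction is that the cubical boundary condition $\lambda(x)_i=x_i$ is ``identity-like'' rather than antipodal, so a nontrivial doubling or reflection construction would be required to recast it in Tucker form.
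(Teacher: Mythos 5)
Your proof is correct. Let me walk through the key checks: the piecewise-linear extension $f$ does send each facet $\{x_i=\pm 1\}$ into itself, because any vertex of the carrier simplex appearing with positive barycentric weight must itself have $i$-th coordinate $\pm 1$ (as $\square^n$'s coordinate functionals are maximized at that value), and the boundary hypothesis then pins down $\lambda(v)_i = v_i$; the straight-line homotopy $H_t$ therefore stays in every facet containing $x$, giving a homotopy through self-maps of $\partial\square^n$ from the identity to $f|_{\partial\square^n}$; and the degree/no-retraction argument then forces $0 \in f(\square^n)$, hence $0 \in f(\sigma) = \conv\{\lambda(v):v\in V(\sigma)\}$ for some $\sigma$, which by Proposition~\ref{propneutral} is neutral.

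This is a genuinely different route from the one the paper takes. The paper does not supply its own proof of Theorem~\ref{thspcubcub}; it cites Kuhn, whose argument is combinatorial: relabel the vertices asymmetrically with $(n+1)$ labels, run a Sperner-type parity/path-following argument to extract a panchromatic simplex in the new labelling, and observe that this forces a neutral simplex in the original labelling. Kuhn's proof is constructive and algorithm-friendly and avoids topology altogether, at the cost of an opaque relabelling trick. Your degree-theoretic proof is shorter and conceptually transparent---it makes visible exactly why the theorem is a cube-flavored manifestation of Brouwer, matching the geometric philosophy of the paper (and paralleling the Brouwer-based argument the paper does write out for Theorem~\ref{thspoctcub} in Section~4)---but it buys that clarity by spending the full strength of Brouwer/degree theory rather than producing a combinatorial witness. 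Your closing remark about the obstruction to a Tucker-style reduction (the ``identity-like'' rather than antipodal boundary condition) is also in agreement with the paper's discussion at the end of Section~2.
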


Theorem~\ref{thspoctoct} is a special case of a theorem originally conjectured by Atanassov and proven by De Loera et al. \cite{Ata96,LPS02}.  Theorem~\ref{thspcubcub} is implied by a theorem of Kuhn \cite{Kuh60}.  However, Kuhn actually relabels the vertices of the triangulation asymmetrically using $(n+1)$ labels, and demonstrates the existence of a panchromatic simplex under the relabelling, which directly implies the existence of a neutral simplex under the original labelling.  His result can be regarded as a ``Cubical Sperner with Tetrahedral Labels'', where the \emph{domain} over which the triangulation takes place is distinct from the \emph{codomain} whose extreme points are the labels (though the domain and codomain are still homeomorphic).  As it turns out, allowing the domain and codomain to differ yields several additional fixed-point theorems.  We give some examples:
\begin{theorem}[Cubical Sperner with Octahedral Labels]\label{thspcuboct}
Let $T$ be a triangulation of $\square^n$.  Let $\lambda:V(T)\rightarrow \ext(\Diamond^n)$ be a label function with the property that for all vertices $x = (x_1,\ldots,x_n) \in V(T)$, for all $1 \leq i \leq n$, if $x_i\in\{-1,1\}$, then $\lambda_i(x)\neq -x_i e_i$.  Then $T$ contains a complementary edge.
\end{theorem}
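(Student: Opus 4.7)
The plan is to derive Theorem~\ref{thspcuboct} from Theorem~\ref{thspoctoct} (Octahedral Sperner with Octahedral Labels) via an explicit geometric embedding. The key tool is the radial homeomorphism $\phi: \square^n \to \Diamond^n$ defined by $\phi(x) = x \cdot ||x||_\infty / ||x||_1$ for $x \neq 0$ (and $\phi(0) = 0$). One checks that $\phi$ is continuous with continuous inverse, restricts to a homeomorphism $\partial\square^n \to \partial\Diamond^n$, and preserves the sign of each coordinate: $\mathrm{sign}(\phi(x)_i) = \mathrm{sign}(x_i)$, with $\phi(x)_i = 0$ iff $x_i = 0$. Pushing $T$ forward by $\phi$ yields a (topological) triangulation $\phi(T)$ of $\Diamond^n$ with the induced labeling, and complementary edges in $\phi(T)$ correspond bijectively to those of $T$.

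For any boundary vertex $v \in V(T)$ and any coordinate $i$ with $v_i \in \{-1,1\}$, the cube hypothesis $\lambda(v) \neq -v_i e_i$ translates directly to the octahedral hypothesis at $\phi(v)$, since $\mathrm{sign}(\phi(v)_i) = v_i$ in this case: Theorem~\ref{thspoctoct} forbids $\lambda(v) = -e_i$ when $v_i = 1$ and $\lambda(v) = e_i$ when $v_i = -1$, matching the cube condition exactly. The gap in the direct reduction is that when $v$ is a boundary vertex of $T$ with $0 < |v_i| < 1$ for some $i$, the octahedral hypothesis still fires---it demands $\lambda(v) \neq -\mathrm{sign}(v_i) e_i$---while the cube hypothesis is silent for this coordinate.

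To close this gap, I would refine $T$ along $\partial\square^n$ so that every boundary vertex has all coordinates in $\{-1,0,1\}$. After such a refinement, $\phi(v)_i \neq 0$ holds precisely when $v_i \in \{-1,1\}$, so the cube condition exactly matches the octahedral Sperner condition demanded by Theorem~\ref{thspoctoct}. Applying that theorem to the refined $\phi(T)$ produces a complementary edge, which pulls back via $\phi^{-1}$ to a complementary edge in the refined $T$; under the contrapositive assumption that the original $T$ has no complementary edge, any such edge in the refined triangulation must be traceable to a configuration already forcing a complementary edge in $T$ itself.

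The main obstacle is the labeling-extension step of the refinement: each new boundary vertex $v$ must be assigned a label in $\ext(\Diamond^n)$ that satisfies the cube condition at $v$ and that does not create a new complementary edge with any of its neighbors in the current triangulation. The key observation that makes this tractable is that if $v$ (a grid point) is inserted into the relative interior of a simplex $\tau \in T$, then every $u \in V(\tau)$ must share the sign pattern of $v$ on every coordinate $i$ where $|v_i|=1$ (otherwise the convex combination $v = \sum \alpha_u u$ would force some $u_i \neq v_i$, contradicting $|v_i|=1$). Hence the cube condition at each such $u$ already forbids $\lambda(u) = -v_i e_i$, making the label $v_i e_i$ a ``safe'' choice for $v$ against all of $V(\tau)$. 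Extending this observation to the full star of $\tau$---and ordering the subdivisions so that the remaining forbidden sets always leave a valid label available---constitutes the technical heart of the argument.
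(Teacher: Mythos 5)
The radial map $\phi$ is a natural first thing to try, but your refinement step does not repair the gap you yourself identified. The vertices of the original triangulation $T$, together with their original labels, remain vertices of any refinement of $T$ --- subdividing adds Steiner points; it cannot remove or relabel boundary vertices of $T$ that happen to have fractional coordinates. So suppose $T$ contains a boundary vertex $v=(1,\tfrac12,0)$ labelled $\lambda(v)=-e_2$. This is permitted by the hypothesis of Theorem~\ref{thspcuboct}, which only constrains $\lambda(v)$ on coordinate $1$. But $\phi(v)$ has positive second coordinate, so the hypothesis of Theorem~\ref{thspoctoct} forbids the label $-e_2$ there, and this remains true for $\phi(T')$ for every refinement $T'$ of $T$. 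You therefore can never invoke Theorem~\ref{thspoctoct}, and the argument collapses before it reaches the Steiner-point-labelling step you flag as the ``technical heart.'' (A second, smaller mismatch: even on grid points with $v_i=0$, the octahedral hypothesis forbids \emph{both} $e_i$ and $-e_i$, which is again strictly more than the cubical hypothesis demands; your choice of label for new grid points happens to respect this, but it is not a consequence of the cube condition.)

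The way the paper closes exactly this gap is not to map $\square^n$ \emph{onto} $\Diamond^n$, but to embed $\square^n$ strictly \emph{inside} a dilated octahedron $c\,\Diamond^n$, following the template of Theorem~\ref{dilligaf}. Then every vertex of $T$ --- including every boundary vertex --- becomes an interior vertex of the enlarged domain, and the hypothesis of Theorem~\ref{thspoctoct} imposes no constraint on it whatsoever. One triangulates the annulus $c\,\Diamond^n\setminus\mathrm{int}(\square^n)$ orthant by orthant so that no edge crosses a coordinate hyperplane, arranges that the only vertices on $\partial(c\,\Diamond^n)$ are the extreme points $\pm c\,e_i$ (which can be labelled $\pm e_i$ and trivially satisfy the octahedral hypothesis), and labels the interior Steiner vertices so as to introduce no complementary edges --- the sign-pattern-inheritance observation you make for vertices in the relative interior of a simplex is exactly the right kind of local argument here, just deployed in the annulus rather than inside a refinement of $T$. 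Theorem~\ref{thspoctoct} applied to the big triangulation then yields a complementary edge, which by construction must lie in $T$.
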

\begin{theorem}[Octahedral Sperner with Cubical Labels]\label{thspoctcub}
Let $T$ be a triangulation of $\Diamond^n$.  Let $\lambda:V(T)\rightarrow \ext(\square^n)$ be a label function with the property that for all vertices $x \in V(T)$, for all $v\in\ext(\square^n)$, if $v^T x=1$, then $\lambda(x)\neq-v$.  Then $T$ contains a neutral simplex.
\end{theorem}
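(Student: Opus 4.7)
The plan is to argue by contradiction via a topological-degree computation applied to the piecewise-linear extension of $\lambda$. First, I would extend $\lambda$ affinely across each simplex of $T$ to obtain a continuous map $\bar\lambda : \Diamond^n \to \square^n$. By Proposition~\ref{propneutral}, the absence of a neutral simplex is equivalent to $\bar\lambda(\Diamond^n) \subseteq \partial \square^n$; assuming this inclusion for contradiction, I regard $\bar\lambda$ as a continuous map into $\partial \square^n$, which is homeomorphic to $S^{n-1}$.

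Next, I would unpack the boundary hypothesis facet by facet. For each sign pattern $s \in \ext(\square^n)$, any triangulation vertex $x$ lying on the octahedral facet $F_s = \conv\{s_1 e_1, \ldots, s_n e_n\}$ satisfies $s^T x = 1$, so the hypothesis forces $\lambda(x) \neq -s$. A convex combination of $\ext(\square^n)$-vectors each distinct from $-s$ must itself differ from $-s$ in at least one coordinate, since equality in coordinate $i$ would require every summand to equal $-s_i$. Hence $\bar\lambda(F_s) \subseteq \partial \square^n \setminus \{-s\}$ for every $s$. The standard radial homeomorphism $\psi: \partial \Diamond^n \to \partial \square^n$, $\psi(x) = x/\|x\|_\infty$, also satisfies $\psi(F_s) \subseteq \partial \square^n \setminus \{-s\}$, because $\psi$ sends $F_s$ into a neighborhood of the cube vertex $s$. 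Since each set $\partial \square^n \setminus \{-s\}$ is contractible, $\bar\lambda|_{F_s}$ admits a homotopy to $\psi|_{F_s}$ within it; assembling these facet-wise homotopies inductively across their shared lower-dimensional faces produces a global homotopy from $\bar\lambda|_{\partial \Diamond^n}$ to $\psi$. Because $\psi$ has topological degree $\pm 1$, so does $\bar\lambda|_{\partial \Diamond^n}$. But $\bar\lambda$ extends continuously to the contractible domain $\Diamond^n$, so its boundary restriction must be nullhomotopic and hence have degree $0$---a contradiction.

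The main obstacle is the inductive gluing of the facet-wise homotopies in the previous step: adjacent facets $F_s$ and $F_{s'}$ (whose sign patterns differ in a single coordinate) meet in an $(n{-}2)$-face on which the two local homotopies must agree, and this shared face is mapped into $\partial \square^n \setminus \{-s, -s'\}$, which fails to be contractible for $n \geq 2$. The resolution relies on two facts: the boundary hypothesis on shared lower-dimensional faces is in fact \emph{stronger}, since a triangulation vertex there avoids $-v$ for every $v$ that extends the shared sign pattern; and the shared faces are themselves contractible simplices. An obstruction-theoretic induction on the skeleton of $\partial \Diamond^n$---working outward from the octahedral vertices $\pm e_i$, where the hypothesis pins $\bar\lambda$ to a single cube facet---allows compatible facet-wise homotopies to be built without obstruction.
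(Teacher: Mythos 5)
Your overall strategy---extend $\lambda$ piecewise-linearly, use the no-neutral-simplex hypothesis to push the image into $\partial \square^n$, and derive a degree contradiction by comparing $\bar\lambda|_{\partial\Diamond^n}$ with the radial homeomorphism $\psi$---is sound and is the degree-theoretic counterpart of the paper's Brouwer fixed-point argument. The paper works with $g(-f(x))$ on $\Diamond^n$ and shows it has no fixed points; the content of its step ``$-f(x)\notin R(S(x))$'' is precisely that $-\bar\lambda(x)$ and $\psi(x)$ cannot fall in the same boundary region, which is the same geometric fact your homotopy needs. So the approaches are morally the same, just phrased with degree versus Brouwer.

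The gap is in the gluing step, and your own diagnosis of it is not quite resolved by what you write. ``An obstruction-theoretic induction\ldots allows compatible facet-wise homotopies to be built without obstruction'' is a conclusion, not an argument: obstructions to extending a homotopy across a $k$-cell live in homotopy groups of whatever subspace of $\partial\square^n$ the homotopy is constrained to land in, and you have not identified that subspace or explained why those groups vanish. The fact that the shared \emph{domain} faces are contractible simplices is not the relevant point. What actually makes the induction work is that the \emph{target} is contractible on every face: if $G$ is a face of $\Diamond^n$ with partial sign pattern $s$ on index set $I$, the hypothesis forces every triangulation vertex on $G$ to avoid every cube vertex extending $-s$, so $\bar\lambda(G)\subseteq \partial\square^n \cap \conv\{w\in\ext(\square^n):\exists i\in I,\ w_i=s_i\}$. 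That set is $\partial\square^n$ intersected with the half-space $\{y:\sum_{i\in I}s_iy_i \ge -|I|+2\}$, i.e.\ $S^{n-1}$ with an open cap around $-s$ removed---a contractible disk. The map $\psi$ lands in the same disk on $G$, and these disks nest correctly as $G$ ranges over the face poset. At that point the contractible carrier lemma (or an equivalent cell-by-cell induction) gives the homotopy $\bar\lambda|_{\partial\Diamond^n}\simeq\psi$ directly, with no obstruction computation to do. You gesture at the first half of this (the stronger boundary hypothesis on lower faces) but never draw the conclusion that the allowed image region on each face is contractible, which is the load-bearing fact. With that observation made explicit, your proof goes through; without it, the homotopy claim is asserted rather than proved.

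One smaller point: the claim that ``$\psi$ sends $F_s$ into a neighborhood of the cube vertex $s$'' is a reasonable intuition but worth checking against the same carrier---$\psi(x)=x/\|x\|_\infty$ preserves sign patterns, so $\psi(F_s)$ indeed lands in the $s$-orthant of $\partial\square^n$, which lies inside the contractible region $C(F_s)$. Nailing this down is what justifies putting $\bar\lambda$ and $\psi$ in the same carrier in the first place.
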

Theorem~\ref{thspcuboct} is due to Freund \cite{Fre84b}.  Theorem~\ref{thspoctcub} appears to be novel, though it is related to a result of Freund \cite{Fre89} that assumes stronger conditions on the label function, and establishes a stronger result---namely, that $T$ contains a simplex whose labels, when regarded as a subset of the extreme points of $\square^n$, contain the origin (or some other specific point) in their convex hull.  The property of having such a simplex is weaker than the property of having a complementary edge (since any simplex $\sigma$ containing a complementary edge must contain two vertices whose labels have the origin as their midpoint, and hence the convex hull of the labels used must contain the origin).  However, it is stronger than the property of having a neutral simplex; for example, the set $S = \{(1,1,-1),(1,-1,1),(-1,1,1),(1,1,1)\}$ does not contain the origin in its convex hull, but corresponds to the labels of a neutral simplex.  One may observe that a triangulation of $\Diamond^3$ can be labelled using labels in the set $S$ while respecting the conditions of Theorem~\ref{thspoctcub}, so our result cannot be implied by Freund's.

It is not clear how one might prove Theorem~\ref{thspoctcub} combinatorially.  It does not seem possible to employ a relabelling trick similar to Kuhn's to reduce Theorem~\ref{thspoctcub} to a statement involving only polynomially many different labels.  Additionally, the methods of Freund do not appear to be applicable.  In Section 4, we provide a proof of Theorem~\ref{thspoctcub} via the Brouwer fixed-point theorem.  We leave it as an open question to prove this theorem via combinatorial means.

It is undoubtedly possible to derive further Sperner-like theorems using other domains and codomains, but the aforementioned results are the most natural due to the symmetrical geometry of the cube and octahedron.  Octahedral labels are particularly nice due to their relation to Tucker's lemma; indeed, it is possible to prove Theorem~\ref{thspcuboct} and Theorem~\ref{thspoctoct} directly from Tucker's lemma, as we show in Section 2.

To state Tucker-like fixed-point theorems, we require a notion of antipodality:
\begin{definition}
Let $T$ be a topological triangulation of $B^n$ with $X=\bigcup_{\sigma \in T}\sigma$.  We define the \emph{boundary} of $T$ to be the triangulation $\partial(T)\subset T$ containing all simplices in $T$ that lie entirely in the boundary $\partial(X)$.  $T$ is said to be \emph{antipodally symmetric on the boundary} if, for all simplices $\sigma \in \partial(T)$, the reflected simplex $-\sigma$ also lies in $\partial(T)$.
\end{definition}

Tucker's lemma can then be stated as follows (originally in \cite{Tuc46}; see \cite{Meu06} for a survey):
\begin{theorem}[Tucker's Lemma]\label{tucker}
Let $T$ be a triangulation of $B^n$ that is antipodally symmetric on the boundary of the domain $X$.  Let $\lambda:V(T)\rightarrow \ext(\Diamond^n)$ be a label function such that $\lambda(v) = -\lambda(-v)$ for all $v \in \partial(X)$.  Then $T$ contains a complementary edge.
\end{theorem}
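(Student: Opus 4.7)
The plan is to prove Tucker's Lemma by contradiction, reducing it to the Borsuk--Ulam theorem via an explicit piecewise-linear map. Suppose $T$ contains no complementary edge. Then Proposition~\ref{propcomp} tells us that for every simplex $\sigma \in T$, the set $\conv\{\lambda(v) : v \in V(\sigma)\}$ is disjoint from the interior of $\Diamond^n$; equivalently, the labels used on $\sigma$ avoid at least one antipodal pair $\{e_i,-e_i\}$, and hence all lie on a common proper face of $\Diamond^n$.

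Next, I would define a continuous map $f : X \to \partial(\Diamond^n)$ by setting $f(v) = \lambda(v)$ on each vertex $v \in V(T)$ and extending affinely on each simplex. This is well-defined and continuous: the image of each simplex lies in the single face of $\Diamond^n$ spanned by its labels (a subset of $\partial(\Diamond^n)$), and affine extensions agree on shared subfaces. The boundary condition $\lambda(v) = -\lambda(-v)$ on $V(T)\cap\partial(X)$, combined with the antipodal symmetry of $T$ on the boundary, promotes vertex antipodality to $f(-x) = -f(x)$ for every $x \in \partial(X)$.

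Finally, identifying $\partial(X)$ and $\partial(\Diamond^n)$ with $S^{n-1}$, the restriction $f|_{\partial(X)}$ is an odd continuous self-map of $S^{n-1}$, so by the Borsuk--Ulam theorem (or the degree-theoretic consequence that odd maps $S^{n-1}\to S^{n-1}$ have odd degree) it is not null-homotopic. But $f$ continuously extends this boundary map over $X \cong B^n$, which would force the degree to vanish. This contradiction proves the theorem.

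The main obstacle with this route is that it invokes the topological Borsuk--Ulam theorem, whereas the paper emphasizes combinatorial proofs. A purely combinatorial argument in the style of Fan would instead designate certain simplices as \emph{alternating} (labels, sorted by increasing absolute value, strictly alternate in sign) and run a parity argument: interior ``doorways'' between alternating $n$-simplices pair up evenly, while the antipodal symmetry of $\partial(T)$ pairs the boundary alternating $(n-1)$-faces, so the total count of alternating $n$-simplices has a fixed odd parity and cannot be zero. The delicate step is the combinatorial bookkeeping that forces full alternation — essentially an induction on the label index — and this is presumably what the framework in Section 3 generalizes.
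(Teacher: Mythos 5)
Your topological proof via Borsuk--Ulam is correct and is the standard argument: Proposition~\ref{propcomp} shows that absence of a complementary edge forces the labels of each simplex onto a common proper face of $\Diamond^n$, so the affine extension $f$ lands in $\partial(\Diamond^n)$; boundary antipodality of $\lambda$ together with antipodal symmetry of $\partial(T)$ makes $f|_{\partial X}$ an odd self-map of $S^{n-1}$, which has odd degree, yet it extends over the disk $X$ and so must be null-homotopic — contradiction. However, the paper takes a genuinely different route. Tucker's Lemma is deduced in Theorem~\ref{paritytucker} from the combinatorial Parity Proof Framework (Theorem~\ref{parity_proof_framework}), a generalization of Ky Fan's inductive parity argument: one sets $F$ to be the labellings containing a complementary pair, partitions $M^{i-1}$ by the sign of the smallest-index label present, and inductively identifies $M^i$ with the sign-alternating $i$-labellings, whence $M^n = \emptyset$ contradicts the guaranteed odd count of $n$-simplices labelled from $M^n$. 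Your final paragraph correctly anticipates this Fan-style parity mechanism but does not carry it out. The trade-offs are real: your argument is shorter and more conceptual but imports the topological Borsuk--Ulam theorem, which the paper is deliberately avoiding; the paper's route is purely combinatorial and, crucially for its agenda, generalizes mechanically to exotic label sets — it is precisely what yields the new Theorem~\ref{thtucub} with cubical labels, where no clean alternating characterization of $M^i$ is available but the induction still closes.
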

Our geometric intuition suggests that similar theorems should be possible using labels from other codomains homeomorphic to $B^n$.  Although it does not make sense to use labels in $\ext(\Delta^n)$ due to there being no natural notion of \emph{negation} among them, we can use labels in $\ext(\square^n)$ to obtain the following new result: 

\begin{theorem}[Tucker's Lemma with Cubical Labels]\label{thtucub}
Let $T$ be a triangulation of $B^n$ that is antipodally symmetric on the boundary of the domain $X$.  Let $\lambda:V(T)\rightarrow \ext(\square^n)$ be a label function such that $\lambda(v) = -\lambda(-v)$ for all $v \in \partial(X)$.  Then $T$ contains a neutral simplex.
\end{theorem}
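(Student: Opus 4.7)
The plan is to translate the combinatorial statement into a topological one via the piecewise-linear extension of $\lambda$, and then invoke the Borsuk--Ulam theorem --- essentially the same argument that deduces ordinary Tucker's lemma from Borsuk--Ulam, but with Proposition~\ref{propneutral} playing the role of Proposition~\ref{propcomp}.

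Let $X = \bigcup_{\sigma \in T} \sigma$, and extend $\lambda$ affinely across each simplex of $T$ to obtain a continuous map $f\colon X \to \square^n$. Suppose for contradiction that no simplex of $T$ is neutral. Then Proposition~\ref{propneutral} tells us that, for every $\sigma \in T$, the set $f(\sigma) = \conv\{\lambda(v) : v \in V(\sigma)\}$ misses the interior of $\square^n$, so $f(\sigma) \subseteq \partial \square^n$. Taking the union over $\sigma \in T$, we conclude $f(X) \subseteq \partial \square^n$. The hypothesis $\lambda(v) = -\lambda(-v)$ on boundary vertices, combined with the antipodal symmetry of $T$ on $\partial X$ and affine extension across each boundary simplex, yields $f(-x) = -f(x)$ for every $x \in \partial X$. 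Hence $f|_{\partial X}$ is a continuous odd map from $\partial X \cong S^{n-1}$ to $\partial \square^n \cong S^{n-1}$. By the standard Borsuk--Ulam corollary, every odd self-map of $S^{n-1}$ has odd (hence nonzero) degree and therefore cannot be extended to a continuous map $X \to \partial \square^n$ on the whole ball $X$. This contradicts the existence of the extension $f$ constructed above, so $T$ must contain a neutral simplex.

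The only genuinely geometric step is the ``only if'' direction of Proposition~\ref{propneutral}, which is what distinguishes this cubical version from the octahedral original; everything else is routine once that dichotomy is in hand. A more combinatorial alternative, better aligned with the programme of Section~3, would be to generalize Fan's parity proof of Tucker's lemma: one would track, for each simplex, the set of coordinates whose cube labels have already attained both signs, pair simplices sharing an ``almost-neutral'' face, and run a door-style argument to extract a neutral simplex without appealing to Borsuk--Ulam. The principal obstacle in that alternative route is adapting Fan's pairing structure, which is tailored to the $2n$-label octahedral alphabet, to the exponentially larger $2^n$-label cubical alphabet, where there is no canonical linear order on the labels to drive the parity count.
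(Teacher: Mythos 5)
Your proof is correct, and it follows the standard topological template for deriving Tucker-type lemmas from Borsuk--Ulam: extend $\lambda$ affinely to a piecewise-linear map $f$, use the ``only if'' direction of Proposition~\ref{propneutral} to conclude $f(X) \subseteq \partial\square^n$ when no neutral simplex exists, and then contradict the fact that the odd map $f|_{\partial X}$ has odd, hence nonzero, degree. This is, however, a genuinely different route from the paper's. The paper explicitly remarks that the topological proof is ``straightforward'' and instead proves Theorem~\ref{thtucub} as an instance of its Generalized Parity Framework (Theorem~\ref{parity_proof_framework}): it takes $F$ to be the set of labellings containing a neutral simplex, partitions each $M^{i-1}$ according to the sign of the smallest coordinate $\Phi(\ell^{i-1})$ on which all labels agree, and shows by induction that $\Phi(\ell^i)\ge i+1$ for every $\ell^i \in M^i$. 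This forces $M^n=\emptyset$, contradicting the framework's conclusion that $T$ contains an odd number of $n$-simplices with labellings in $M^n$. Your closing remark about the difficulty of adapting Fan's pairing to the exponentially large cubical alphabet is apt, but the paper's proof shows that this obstacle can be sidestepped: one does not need a closed-form description of the sets $M^i$ (the paper in fact leaves such a characterization as an open problem), only the monotone invariant $\Phi$. The topological route you chose is shorter and requires no ``aligned with hemispheres'' hypothesis; the paper's combinatorial proof is longer but constitutes the central methodological contribution of Section~3, demonstrating that the parity technique extends to label sets of exponential size.
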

As one might expect, it is straightforward to prove Theorem~\ref{thtucub} topologically using the Borsuk-Ulam theorem.  However, we can establish this theorem combinatorially by extending the framework of Ky Fan \cite{Fan52} to a broader class of possible labellings of triangulations.  In Section~3, we state and prove this generalization of Fan's theorem, and show how it implies Theorem~\ref{thtucub}.




\section{Geometric Proofs of Sperner-like Theorems}
In this section, we describe a technique that enables us to explicitly construct geometric reductions between combinatorial fixed-point theorems.  We illustrate our technique through an example:
\begin{theorem} \label{dilligaf}
Tucker's Theorem implies Theorem~\ref{thspoctoct} (Octahedral Sperner with Octahedral Labels).
\end{theorem}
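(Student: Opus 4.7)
The plan is to reduce Theorem~\ref{thspoctoct} to Tucker's Lemma by an explicit geometric construction: from the given $(T, \lambda)$ I would build a Tucker-compatible pair $(T^*, \lambda^*)$ such that (i) $T^*$ is antipodally symmetric on $\partial\Diamond^n$, (ii) $\lambda^*$ is antipodal on $\partial\Diamond^n$, and (iii) any complementary edge of $(T^*, \lambda^*)$ pulls back to a complementary edge of $(T, \lambda)$. Applying Tucker's Lemma to $(T^*, \lambda^*)$ then finishes the reduction. The key geometric observation is that the Sperner condition $\lambda(v)\cdot v > 0$ on the boundary is itself antipodally symmetric, since $\lambda(v)\cdot v > 0$ implies $-\lambda(v)\cdot(-v) > 0$; this is precisely the slack needed to impose antipodal labels on the boundary without violating Sperner.

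For (i), I would refine $T$ so that $T^*|_{\partial\Diamond^n}$ is antipodally symmetric: form the common refinement of $T|_{\partial\Diamond^n}$ with its antipodal image $-T|_{\partial\Diamond^n}$, which is antipodally symmetric by construction, and extend this refined boundary triangulation slightly inward, for example by coning it onto a scaled interior copy $(1-\epsilon)T$ of $T$ placed inside $(1-\epsilon)\Diamond^n$. Every newly introduced vertex of $T^*$ lies inside some simplex of $T$ and can inherit, as its $\lambda^*$-value, the $\lambda$-label of a vertex of that containing simplex---so any complementary edge confined to a single $T$-simplex is automatically a complementary edge of $T$.

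For (ii), I would fix an antipodally-fundamental open half-boundary $U\subset\partial\Diamond^n$ (with antipode $-U$ and common equator $E = \overline{U}\cap\overline{-U}$), set $\lambda^*(v):=\lambda(v)$ on $V(T^*)\cap U$ (inheriting as above for new vertices), and define $\lambda^*(-v):=-\lambda^*(v)$ on their antipodes in $-U$. The antipodal symmetry of the Sperner condition guarantees that each relabelled vertex on $-U$ still satisfies Sperner, and the result is automatically antipodal on $\partial\Diamond^n\setminus E$.

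The main obstacle is (iii), specifically ruling out \emph{spurious} complementary edges introduced by the antipodal relabelling on $-U$. A complementary edge of $T^*$ with both endpoints in $U$ pulls back directly to a complementary edge in a $T$-simplex, and one with both endpoints in $-U$ can be antipodally reflected, via the antipodally-symmetric boundary refinement, into a complementary edge in the opposite $T$-simplex. The delicate case is a \emph{mixed edge} straddling $U$ and $-U$, where pairing an inherited label with an antipodally modified label could create opposite labels with no counterpart in $T$. I would control this case by further refining $T^*$ so that the equator $E$ is a subcomplex, forcing every mixed edge to pass through an equator vertex, and by defining $\lambda^*$ on $E$ via an inductive application of the same reduction in dimension $n-1$ (the base case $n=1$ is trivial, since the Sperner and Tucker hypotheses coincide on $\Diamond^1 = [-1,1]$). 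Setting up this equatorial buffer consistently, so that labels across $E$ remain Sperner-compatible and no pair of opposite labels is manufactured in the process, is the principal technical challenge of the construction.
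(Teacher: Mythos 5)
Your approach is genuinely different from the paper's, and it has a real gap. The paper goes \emph{outward}: it embeds $\Diamond^n$ in the interior of a dilated copy $X = 2\Diamond^n$, leaves $T$ and $\lambda$ completely untouched, and extends to a triangulation $T^*$ of $X$ by slicing the shell $X\setminus\mathrm{int}(\Diamond^n)$ into $2^n$ orthant pieces, triangulating each piece with no new boundary vertices except $\ext(X)=\{\pm 2e_i\}$, and setting $\lambda^*(2e_i)=e_i$. Because the Sperner condition forces every boundary label of $T$ in a given orthant to have the sign pattern of that orthant, and because the slicing ensures that no edge of $T^*\setminus T$ crosses between orthants, the shell contains no complementary edge, so Tucker's lemma applied to $T^*$ (which is antipodal on $\partial X$) produces a complementary edge inside $T$. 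You instead go \emph{inward}: you keep the domain $\Diamond^n$, shrink the original $T$ to $(1-\epsilon)T$, make the boundary triangulation antipodally symmetric by common refinement, and then \emph{change} the labels on half the boundary $-U$ via $\lambda^*(-v):=-\lambda^*(v)$. Your observation that the Sperner condition $\lambda(v)\cdot v>0$ is antipodally symmetric is correct and is the right reason the relabelled boundary is still Sperner-proper.

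The gap is in your item (iii), and it is not confined to the equator. Relabelling $-U$ means the labels there are \emph{not} the labels the original $\lambda$ assigned those points, so an edge in the collar between $\partial\Diamond^n$ and $(1-\epsilon)\Diamond^n$, joining a relabelled $-U$ vertex to an interior vertex of $(1-\epsilon)T$ (which keeps its original label), can be a complementary edge with no counterpart in $T$. Concretely, if $v\in V(T)\cap(-U)$ originally had $\lambda(v)=e_1$ but $\lambda(-v)=e_2$, your rule sets $\lambda^*(v)=-e_2$, and if $v$ is joined in the collar to a vertex $w\in(1-\epsilon)T$ with $\lambda^*(w)=e_2$ you manufacture a spurious complementary edge. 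Your pull-back argument only covers complementary edges lying entirely in $U$ or entirely in $-U$ (the latter reflected into $U$); it does not cover $-U$--to--interior edges, which are not antipodally mirrored anywhere. This is in addition to the mixed-edge/equator issue you do flag but leave as an unfinished induction. The paper's outward embedding avoids all of this precisely because it never alters a single label of $T$ and places the new boundary far enough away that the added region can be labelled orthant-by-orthant with labels of a single sign pattern, making complementary edges there impossible by construction. Without a mechanism to rule out collar complementary edges, your reduction does not go through as written.
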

\begin{proof}
Let $T$ be a triangulation of $\Diamond^n$ with label function $\lambda:V(T)\rightarrow \ext(\Diamond^n)$ satisfying the conditions of Theorem~\ref{thspoctoct}.  Let $X = 2\Diamond^n$ be a dilated copy of the $n$-dimensional octahedron, so that $\Diamond^n$ lies entirely within the interior of $X$.  Our key idea is to extend the triangulation $T$ and label function $\lambda$ to a triangulation $T^*$ of $X$ and a label function $\lambda^*: V(T^*) \rightarrow \ext(\Diamond^n)$ so that the following properties hold:
\begin{enumerate}
\item $T \subset T^*$, and $\lambda^*(v) = \lambda(v)$ for each vertex $v$ in $V(T)$.
\item $T^*$ is antipodally symmetric on the boundary, and $\lambda^*(v) = -\lambda^*(-v)$ for each $v \in V(T^*) \cap \partial(X)$.
\item There are no complementary edges in $T^* \setminus T$.
\end{enumerate}
If we can construct such a $T^*$ and $\lambda^*$, then Theorem~\ref{thspoctoct} immediately follows from Tucker's lemma, since $T^*$ must contain a complementary edge if property (2) is true, and this complementary edge must then lie in $T$ by properties (1) and (3).

It remains to define a new triangulation $T^*$ and labelling $\lambda^*$.  Let $Y = (X \setminus \Diamond^n) \cup \partial(\Diamond^n)$.  To construct $T^*$, we shall first construct a triangulation of $Y$ that agrees with $T$ on $\partial(\Diamond^n)$, and contains no vertices lying on $\partial(X)$ other than those in $\ext(X)$.  This construction is made significantly simpler through the application of known theory regarding triangulations of polytopes; the reader is referred to \cite{Lee97} for background.  We first subdivide the region $Y$ into $2^n$ pieces via slicing by the $n$ different $(n-1)$-dimensional coordinate hyperplanes.  Each piece that remains is a convex $n$-polytope lying entirely within a single orthant.  We consider the subdivisions obtained by adjoining each polytope to all of the simplices in $T$ that lie on the respective facet of $\Diamond^n$.  We then refine each of these subdivisions to a triangulation using sequential `pushing' or `pulling' steps as described in \cite{Lee97}.  Note that we must perform these refinements iteratively, one subdivision at a time, in order to ensure that simplicies lying in the intersection of two neighbouring subdivisions are respected in the triangulation.  

After triangulating $Y$, we extend this triangulation to all of $X$ by simply adding the simplices in our original triangulation $T$ of $\Diamond^n$, noting that the resulting collection of simplices is indeed a triangulation, because the simplices agree at the boundard of $\Diamond^n$.  We define  $T^*$ to be the resulting triangulation obtained by adding the triangulation of $Y$ to $T$.

We next define an extended labelling $\lambda^*: V(T^*) \rightarrow \ext(\Diamond^n)$.  We set $\lambda^*(v) = \lambda(v)$ for each vertex $v \in V(T)$, and $\lambda^*(2e_i) = e_i$ for the remaining vertices of $T^*$---the extreme points of $X$.  Because we split the region $Y$ into orthants before triangulating it, we can be sure that no complementary edge exists among the simplices in $T^* \setminus T$, as there are no edges between in $T^* \setminus T$ between vertices lying within different orthants.  However, $T^*$ is antipodally symmetric on the boundary of $X$, and $\lambda^*(v) = -\lambda^*(-v)$ for each $v \in V(T^*) \cap \partial(X)$, so Tucker's theorem implies that $T^*$ must contain a complementary edge somewhere.  It therefore must lie in $T$, completing the proof.
\end{proof}

We can use the techniques of Theorem~\ref{dilligaf} to prove Sperner-like combinatorial fixed-point theorems directly via implications from other fixed-point theorems.  The proofs all involve similar techniques to those used above in the proof of Theorem~\ref{dilligaf}; the key steps in proving that fixed point theorem $A$ implies Sperner-like theorem $B$ always follows the same pattern:
\begin{enumerate}
\item First, we embed the domain $Z$ of theorem $B$ inside the interior of the domain $X$ in used in theorem $A$.
\item We construct an extension of the triangulation $T$ from theorem $B$ to a triangulation $T^*$ of $X$ by constructing a triangulation of $X \setminus Z \cup \partial(Z)$ that agrees with $T$ on $\partial(Z)$.  We ensure that there are no vertices lying on $\partial(X)$ other than those in $\ext(X)$.  We also ensure that this triangulation has no edges that go between two different orthants (in other words, all edges that go between a point in orthant $O_1$ and a point in orthant $O_2$ have an end in $O_1 \cap O_2$.)  For technical reasons (chiefly, lack of convexity), this may require adding additional ``Steiner'' vertices in the interior of $X$, but this poses no problem.
\item We extend the labelling of $T$, as given in theorem $B$, to a labelling of all the vertices used in our extended triangulation $T^*$.  We ensure that this labelling has two important properties: First, on the boundary $\partial(X)$, it must respect the labelling constraints required by theorem $A$ (either an antipodality condition if theorem $A$ is a Tucker-like theorem, or a \emph{proper labelling condition} if theorem $A$ is a Sperner-like theorem).  Secondly, it must add no ``forbidden'' simplices, with respect to the constraints imposed in theorem $B$.
\item Finally, we argue that because theorem $A$ implies the existence of a forbidden simplex in $T^*$, there must exist a forbidden simplex in $T$, proving theorem $B$.
\end{enumerate}
Using this idea, alongside standard triangulation techniques, it is straightforward to establish all of the following:
\begin{theorem}
Tucker's Theorem implies Theorem~\ref{thspcuboct} (Cubical Sperner with Octahedral Labels).
\end{theorem}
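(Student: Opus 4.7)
The plan is to follow the template from Theorem~\ref{dilligaf}: embed $\square^n$ strictly inside a larger symmetric region $X$ homeomorphic to $B^n$, extend $T$ and $\lambda$ to a triangulation $T^*$ of $X$ with labelling $\lambda^*$ satisfying Tucker's antipodal hypotheses on $\partial(X)$, such that no complementary edge lies in $T^* \setminus T$. I would take $X = (n+1)\Diamond^n$, which contains $\square^n$ in its interior and has $\ext(X) = \{\pm(n+1)e_i : 1 \le i \le n\}$. The key geometric decomposition partitions the annulus $Y = (X \setminus \square^n) \cup \partial(\square^n)$ into convex sub-orthant cells
\[ Y_{s,i} = \{x \in Y : \mathrm{sign}(x_j) = s_j \text{ for all } j,\ s_i x_i = \max\nolimits_j s_j x_j\}, \]
indexed by $s \in \{+,-\}^n$ and $i \in \{1,\ldots,n\}$. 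Each $Y_{s,i}$ is star-shaped from the corner apex $(n+1)s_i e_i$, and crucially $Y_{s,i} \cap \partial(\square^n)$ lies within the face $x_i = s_i$---precisely the face on which the hypothesis of Theorem~\ref{thspcuboct} forbids the label $-s_i e_i$.

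I would triangulate each $Y_{s,i}$ as a cone from the apex $(n+1)s_i e_i$ over a simplicial triangulation of the link, which consists of the orthant-$s$ portion of the face $x_i = s_i$ (obtained by restricting $T$ and refining any simplex crossing a coordinate hyperplane $x_j = 0$ by cutting along that hyperplane) together with the equality-boundary faces $s_i x_i = s_j x_j$ shared with each neighbouring cell $Y_{s,j}$. The cross-hyperplane refinement introduces new vertices on the face $x_i = s_i$ but creates no new edges between pre-existing $T$-vertices; additional Steiner vertices can be placed on equality faces and cell interiors using the standard pushing/pulling techniques of \cite{Lee97}, mirrored antipodally across opposite cells. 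I would label $\lambda^*(v) = \lambda(v)$ on $V(T)$ and $\lambda^*(v) = s_i e_i$ on every new vertex lying in $\overline{Y_{s,i}}$ (including the apex); ambiguity at shared equality faces is resolved by picking either candidate label, and the corner labels $\lambda^*(\pm(n+1)e_i) = \pm e_i$ then give the required antipodal symmetry on $\partial(X)$.

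Finally, I would verify that $T^* \setminus T$ contains no complementary edge, so that Tucker's lemma delivers such an edge inside $T$. Every edge of $T^* \setminus T$ lies in some $\overline{Y_{s,i}}$ by the cone construction. If both endpoints are new vertices, their labels have the form $s_i e_i$ and $s_j e_j$ within a common orthant $s$, and such labels sum to zero only if $i = j$ with antipodal signs, which is impossible within a single orthant. If one endpoint is a new vertex labelled $s_i e_i$ and the other is a $T$-vertex on face $x_i = s_i$, the hypothesis yields $\lambda(v) \ne -s_i e_i$, so the pair is non-complementary. Any edge joining two $T$-vertices in the annulus triangulation is automatically an edge of the cone's base $T$-simplex, and hence already lies in $T$. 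The main obstacle will be executing the triangulation construction itself: verifying that the sub-orthant cone triangulations glue together consistently across equality faces, that the refinement of $T$ on the inner boundary stitches up into a valid simplicial complex, and that antipodal symmetry is preserved throughout. These are routine but bookkeeping-heavy applications of polytope-triangulation theory.
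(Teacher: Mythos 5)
Your proposal correctly implements the paper's four-step geometric-embedding template from Section~2; the paper gives no explicit proof of this implication beyond stating that it follows the pattern of Theorem~\ref{dilligaf}, so there is no line-by-line comparison to make. The one substantive addition you supply---refining the orthant split into the sub-orthant cells $Y_{s,i}$ so that each cone apex $(n+1)s_i e_i$ attaches only to $T$-vertices lying on the cube face $x_i = s_i$---is exactly what is needed (and implicitly required) here, because the hypothesis of Theorem~\ref{thspcuboct} only forbids $\lambda(v) = -s_i e_i$ when $v$ actually lies on that face, so a bare orthant decomposition as in Theorem~\ref{dilligaf} would not preclude complementary edges in $T^* \setminus T$. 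One small caution: your construction unavoidably places vertices on $\partial(X)$ beyond the corners $\pm(n+1)e_i$ (on the outer boundaries of the equality facets $s_i x_i = s_j x_j$), so antipodal symmetry of both the triangulation and the labelling must be enforced on those vertices too, not just on the corners; your prescription to mirror Steiner vertices and resolve labelling ambiguities antipodally across opposite cells does handle this, but the sentence claiming the corner labels alone ``give the required antipodal symmetry'' understates what must be checked.
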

\begin{theorem}
Theorem~\ref{thspcuboct} (Cubical Sperner with Octahedral Labels) implies Theorem~\ref{thspoctoct} (Octahedral Sperner with Octahedral Labels).
\end{theorem}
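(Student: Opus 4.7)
The plan mirrors Theorem~\ref{dilligaf}, except that this time the inner domain is the octahedron and the outer domain is the dilated cube $X=2\square^n$, inside whose interior $\Diamond^n$ sits. Given a triangulation $T$ of $\Diamond^n$ with a labelling $\lambda$ meeting the hypotheses of Theorem~\ref{thspoctoct}, I would extend $(T,\lambda)$ to a triangulation $T^*$ of $X$ together with a labelling $\lambda^*:V(T^*)\to\ext(\Diamond^n)$ that satisfies the hypotheses of Theorem~\ref{thspcuboct} and introduces no complementary edge outside of $T$; Theorem~\ref{thspcuboct} will then force the guaranteed complementary edge in $T^*$ to lie inside $T$.

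To build $T^*$, let $Y=(X\setminus\Diamond^n)\cup\partial(\Diamond^n)$ and triangulate $Y$ orthant by orthant, exactly as in Theorem~\ref{dilligaf}: first slice $Y$ with the $n$ coordinate hyperplanes so that each piece is a convex polytope contained in a single closed orthant, then adjoin each piece to the simplices of $T$ on the corresponding facet of $\Diamond^n$, and finally refine via the pushing/pulling operations of \cite{Lee97}, processing the pieces iteratively so that faces shared between neighbouring pieces remain consistent. Because $X$ is cubical rather than octahedral, a handful of new vertices of $T^*$ (such as the ``partial corners'' $2e_i$) will unavoidably appear on $\partial(X)$, but the orthantwise construction still guarantees that no edge of $T^*\setminus T$ has its two endpoints in two different open orthants.

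Define $\lambda^*:=\lambda$ on $V(T)$, and for every other vertex $v\in V(T^*)\setminus V(T)$ set $\lambda^*(v)=\mathrm{sign}(v_j)\,e_j$, where $j$ is the smallest index with $v_j\neq 0$; this $j$ exists because such a $v$ lies in $Y$ and hence avoids the origin. To check the boundary condition of Theorem~\ref{thspcuboct}, fix any vertex $v$ of $T^*$ on $\partial(X)$; since $V(T)\subset\Diamond^n\subset\mathrm{int}(X)$, any such $v$ must lie in $V(T^*)\setminus V(T)$. For each coordinate $i$ with $v_i\in\{-2,2\}$, the chosen index $j$ satisfies $j\leq i$, and $\mathrm{sign}(v_j)\,e_j$ cannot equal $-\mathrm{sign}(v_i)\,e_i$: the two vectors lie along different coordinate axes when $j<i$, and they carry opposite signs of $e_i$ when $j=i$. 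Hence $\lambda^*$ meets the hypothesis required by Theorem~\ref{thspcuboct}.

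Finally I would check that inside any closed orthant $\mathcal{O}^s$, every label assigned by $\lambda^*$ is of the form $s_je_j$ for some $j$. For $v\in (V(T^*)\setminus V(T))\cap\mathcal{O}^s$ this is immediate, since $v_j\neq 0$ forces $\mathrm{sign}(v_j)=s_j$. For $x\in V(T)\cap\partial(\Diamond^n)\cap\mathcal{O}^s$ with $\lambda(x)=\varepsilon e_i$ ($\varepsilon\in\{\pm 1\}$), the hypothesis of Theorem~\ref{thspoctoct} forces $\varepsilon x_i>0$, so $x_i\neq 0$ and $s_i=\mathrm{sign}(x_i)=\varepsilon$. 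The set $\{s_je_j:1\leq j\leq n\}$ contains no complementary pair, and no edge of $T^*\setminus T$ straddles two orthants, so $T^*\setminus T$ contains no complementary edge. Applying Theorem~\ref{thspcuboct} to $(T^*,\lambda^*)$ then produces a complementary edge in $T^*$, which must lie in $T$, yielding Theorem~\ref{thspoctoct}. The principal obstacle is the triangulation step, which is technical but parallels Theorem~\ref{dilligaf} verbatim.
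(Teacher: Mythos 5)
Your proposal is correct and carries out exactly the plan the paper sketches (the paper only says the implication follows by the method of Theorem~\ref{dilligaf} and gives no details), so this is a faithful instantiation rather than a different route. The one place where you sensibly depart from the paper's schematic framework is step 2 of that framework, which asks to ensure ``no vertices lying on $\partial(X)$ other than those in $\ext(X)$''; as you note, this is not achievable when the outer domain is the cube, since slicing $2\square^n$ by the coordinate hyperplanes necessarily introduces boundary vertices such as $2e_i$. Your remedy---the labelling rule $\lambda^*(v)=\mathrm{sign}(v_j)e_j$ for the smallest nonzero coordinate $j$---handles these cleanly: it satisfies the boundary hypothesis of Theorem~\ref{thspcuboct}, and within any closed orthant $\mathcal{O}^s$ every new or boundary-of-$\Diamond^n$ vertex gets a label of the form $s_je_j$, so no two such labels are opposite; combined with the orthantwise triangulation this rules out complementary edges in $T^*\setminus T$. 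Two minor remarks worth making explicit: (i) edges of $T^*\setminus T$ with both endpoints in $V(T)\cap\partial(\Diamond^n)$ cannot occur, because a segment between two boundary points of the convex set $\Diamond^n$ that avoids the interior must lie in $\partial(\Diamond^n)$ and hence is already a simplex of $T$; and (ii) a vertex of $T$ interior to $\Diamond^n$ can never be an endpoint of an edge of $T^*\setminus T$, so its unconstrained label is harmless. Neither observation affects correctness, but spelling them out closes off every case.
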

\begin{theorem}
Theorem~\ref{thspoctoct} (Octahedral Sperner with Octahedral Labels) implies Theorem~\ref{thspcuboct} (Cubical Sperner with Octahedral Labels).
\end{theorem}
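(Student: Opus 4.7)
The proof follows the four-step framework described in the paper immediately after Theorem~\ref{dilligaf}. Let $T$ be a triangulation of $\square^n$ with labelling $\lambda : V(T) \rightarrow \ext(\Diamond^n)$ satisfying the cubical Sperner hypothesis of Theorem~\ref{thspcuboct}. I embed $\square^n$ in a dilated octahedron $X = c\Diamond^n$ for some $c > n$, so that $\square^n$ lies strictly in the interior of $X$. I then construct an extended triangulation $T^*$ of $X$ and an extended labelling $\lambda^* : V(T^*) \rightarrow \ext(\Diamond^n)$, agreeing with $T$ and $\lambda$ respectively on $V(T)$, such that (i) $\lambda^*$ satisfies the boundary condition of Theorem~\ref{thspoctoct} on $\partial(X)$, and (ii) no edge of $T^* \setminus T$ is complementary. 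Theorem~\ref{thspoctoct} applied to $(T^*, \lambda^*)$ then produces a complementary edge, which must lie in $T$.

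To construct $T^*$, I slice the shell $Y = (X \setminus \square^n) \cup \partial(\square^n)$ by the $n$ coordinate hyperplanes, yielding $2^n$ pieces, so that no edge of $T^* \setminus T$ crosses orthants. In each orthant $\epsilon \in \{-1, +1\}^n$, the piece is bounded by the cube faces $F_i^\epsilon := \{x \in \square^n : x_i = \epsilon_i\}$ (which inherit their triangulation from $T$), portions of coordinate hyperplanes, and one outer facet $\conv\{\epsilon_i c e_i : 1 \le i \le n\}$ of $c\Diamond^n$. I triangulate this piece so that (a) the only vertices on the outer facet are the extreme points $\epsilon_i c e_i$, and (b) each extreme point $\epsilon_i c e_i$ is incident only to $T$-vertices on $F_i^\epsilon$. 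A concrete recipe: cone each simplex of $T|_{F_i^\epsilon}$ with $\epsilon_i c e_i$, then triangulate the remaining non-convex gap using simplices whose non-extreme vertices are cube vertices in orthant $\epsilon$, introducing Steiner vertices as needed via the pushing/pulling techniques of \cite{Lee97}. For $n = 2$ the gap is the single triangle $\conv(v^*_\epsilon, \epsilon_1 c e_1, \epsilon_2 c e_2)$, where $v^*_\epsilon := (\epsilon_1, \ldots, \epsilon_n)$; for $n = 3$ one additionally needs pair tetrahedra $\conv(v_{ij}, v^*_\epsilon, \epsilon_i c e_i, \epsilon_j c e_j)$ and one central gap tetrahedron $\conv(v^*_\epsilon, \epsilon_1 c e_1, \epsilon_2 c e_2, \epsilon_3 c e_3)$, and a short volume calculation confirms that these cover the orthant shell exactly.

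For the labelling I set $\lambda^*(\pm c e_i) = \pm e_i$, which is forced at the extreme points by the octahedral Sperner condition on $\partial(X)$, and I give any Steiner vertex in orthant $\epsilon$ the label $\epsilon_1 e_1$. To verify property (ii): an edge between two extreme points $\epsilon_i c e_i$ and $\epsilon_j c e_j$ within the same orthant carries labels $\epsilon_i e_i$ and $\epsilon_j e_j$, which are never antipodal; an edge from $\epsilon_i c e_i$ to a $T$-vertex $v$ on $F_i^\epsilon$ would be complementary only if $\lambda(v) = -\epsilon_i e_i$, which is forbidden by cubical Sperner because $v_i = \epsilon_i$; and by construction any $T$--$T$ edge appearing in a new simplex is already an edge of $T$ (lying in some face $F_i^\epsilon$ and inherited from $T|_{F_i^\epsilon}$, possibly as a chain of edges coming from $T$'s own subdivision of a cube edge).

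The main obstacle is step (b) of the triangulation: unlike in the proof of Theorem~\ref{dilligaf}, where the shell between $\Diamond^n$ and $2\Diamond^n$ is a convex frustum, each orthant shell here is the non-convex set obtained by removing the unit corner cube from the orthant simplex of $c\Diamond^n$. Constructing a triangulation in general dimension that both covers this region and respects the adjacency property (b) requires either an explicit combinatorial recipe indexed by subsets $S \subseteq \{1, \ldots, n\}$ (generalising the cone/pair/gap pattern) or an appeal to general polytopal triangulation theory; it must also remain consistent across orthants on shared coordinate hyperplanes, which can be arranged by choosing the construction symmetrically under reflection through those hyperplanes.
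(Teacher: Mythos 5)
Your proposal follows essentially the same four-step framework the paper itself prescribes: embed $\square^n$ inside a dilated octahedron $X=c\Diamond^n$ with $c>n$, extend $T$ over the shell $Y$ orthant by orthant so that no new edge crosses a coordinate hyperplane and the only vertices placed on $\partial(X)$ are the extreme points $\pm ce_i$, label those extreme points $\pm e_i$ (as forced by the boundary condition of Theorem~\ref{thspoctoct}), and invoke Theorem~\ref{thspoctoct} to conclude. The paper gives no more detail than this sketch --- it explicitly refers the reader to \cite{Lee97} for triangulating ``an orthant of an origin-centered octahedron with an orthant of an origin-centered cube removed,'' which is exactly your shell piece --- so you are in the same position as the authors regarding the general-$n$ triangulation construction, and you are candid about that.

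One place you should tighten the argument. Your verification of ``no new complementary edge'' covers extreme--$T$ edges, extreme--extreme edges, and $T$--$T$ edges, but not Steiner--$T$ edges. The blanket label $\epsilon_1 e_1$ on all Steiner vertices of orthant $\epsilon$ is only safe if Steiner vertices meet $T$-vertices solely through the corner $v^*_\epsilon$, for which $\lambda(v^*_\epsilon)\in\{\epsilon_j e_j: j\}$ is forced, so no complementary edge results. If a Steiner vertex were instead joined to a $T$-vertex $v$ on a face $F_j^\epsilon$ with $j\neq 1$ and $v_1\notin\{-1,1\}$, then $\lambda(v)=-\epsilon_1 e_1$ is \emph{not} excluded by the cubical Sperner hypothesis and you would get a complementary edge outside $T$. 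Your recipe (gap simplices whose only non-extreme, non-Steiner vertex is $v^*_\epsilon$) does arrange the needed incidence restriction, but it is worth stating as an explicit invariant rather than a consequence of an informally described construction. Relatedly, a Steiner vertex lying on the coordinate hyperplane $x_1=0$ sits in two orthants that disagree on $\epsilon_1$, making the rule ``label by $\epsilon_1 e_1$'' ill-defined there and incompatible with the reflective symmetry you invoke to keep adjacent orthant triangulations consistent; a label rule depending only on a coordinate on which the Steiner vertex is strictly nonzero, or simply forbidding Steiner vertices on the coordinate hyperplanes, repairs this.

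Otherwise the logic is correct and matches the paper's intended argument.
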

The proofs of these theorems are all similar to the proof of Theorem~\ref{dilligaf}.  Technical details on how to triangulate non-convex regions, such as an orthant of an origin-centered octohedron with an orthant of an origin-centered cube removed, can be found in \cite{Lee97}.

Unfortunately, the style of geometric argument we describe here relies crucially on a labelling scheme in which negations are permitted.  Accordingly, it appears that additional insight is required in order for it to be possible to use a geometric construction of this nature to directly prove Sperner's lemma from a Tucker-like theorem.

\section{Generalized Parity Framework for Tucker-like Theorems}
In this section, we give a combinatorial proof of Theorem~\ref{thtucub} via a generalization of the inductive parity proof technique of Fan \cite{Fan52}.  We assume our triangulation $T$ is \emph{aligned with hemispheres}, a standard assumption in proofs of Tucker-like lemmas \cite{Mat03,SS03,PS05}:
\begin{definition}
Let $T$ be a triangulation of $B^n$ that is antipodally symmetric on the boundary.  $T$ is \emph{aligned with hemispheres} if there exists a sequence $T^n=T,T^{n-1},\ldots,T^0$, where $T^i$ is a triangulation of $B^i$ that is antipodally symmetric on the boundary, such that $T^{i-1}\cup(-T^{i-1})=\partial(T^{i})$ and $T^{i-1}\cap(-T^{i-1})=\partial(T^{i-1})$.
\end{definition}
Roughly speaking, this means that $\partial(T)$ (a triangulation of $\partial(B^{n})$) can be decomposed into two antipodal triangulations of $B^{n-1}$ that intersect along an equatorial triangulation of $\partial(B^{n-1})$.  Repeating this process recursively shall later facilitate an inductive argument.

We proceed with further definitions.  Suppose we have a label set that can be partitioned into pairs of \emph{opposite} labels.  We say such a label set is \emph{strictly symmetric}.  Define a \emph{labelling} $\ell$ to be an unordered, unoriented way of labelling a simplex, i.e.\ a multi-set of labels.  There is a natural notion of an \emph{opposite labelling} $-\ell$, where we take the opposite of each label in the multi-set.  We use \emph{$i$-labelling} to refer to the labelling of an $i$-simplex, i.e.\ a multi-set of $i+1$ labels.  Let $L^i$ denote the set of $i$-labellings.  Note that $L^i$ may not be strictly symmetric even though the label set is strictly symmetric, e.g.\ labelling $\{+e_1,-e_1\}$ is its own opposite.

Now we define the set of \emph{forbidden labellings} $F\subseteq\bigcup_{i=0}^n L^i$.  (Assume $F\cap L^0=\emptyset$; otherwise we would just make the label set smaller.)  In the case of Tucker's lemma, $F$ is the set of labellings with a complementary edge.  In the case of Theorem~\ref{thtucub}, $F$ is the set of labellings with a neutral simplex.  The idea is to choose $F$'s such that $L^i\backslash F$ is strictly symmetric for all $0\le i\le n$.  That is, $F$ must be symmetric and contain all the labellings which are their own opposite.

If $M^i$ is a strictly symmetric subset of $i$-labellings, then from each pair of opposite labellings, we can choose one to be the `+' labelling and the other to be the `-' labelling.  Let $M^i_+,M^i_-$ denote the set of $+,-$ labellings, respectively.  We call $M^i_+\cup M^i_-$ a \emph{partitioning} of $M^i$.

\begin{theorem}[Parity Proof Framework]
\label{parity_proof_framework}
Let $T$ be a triangulation of $B^n$ that is antipodally symmetric on the boundary and aligned with hemispheres.  Fix the label set, and let $F$ be any set of forbidden labellings such that $L^i\backslash F$ is strictly symmetric for all $i=0,\ldots,n$.  Let $\lambda$ be a label function on $V(T)$ such that $\lambda(v) = -\lambda(-v)$ for all $v$ on the boundary.  Suppose $T$ contains no simplices with a labelling from $F$.  Define $M^0:=L^0$.  Then for $i=1,\ldots,n$:
\begin{itemize}
\item $M^{i-1}$ is strictly symmetric, so we can partition it into $M^{i-1}_+\cup M^{i-1}_-$
\item Define $M^i$ to be the set of $\ell^i$ in $L^i\backslash F$ that contain an odd number of $(i-1)$-labellings from $M^{i-1}_+$.  Then $T^i$ contains an odd number of $i$-simplices with labellings from $M^i$. 
\end{itemize}
\end{theorem}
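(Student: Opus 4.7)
The plan is to prove both bullets by a simultaneous induction on $i$, with base case $i=0$ handled by hypothesis: $M^0 = L^0$ is strictly symmetric because the label set is. For the inductive step, I assume $M^{i-1}$ has already been shown strictly symmetric and partitioned into $M^{i-1}_+ \cup M^{i-1}_-$, and that $T^{i-1}$ contains an odd number of $(i-1)$-simplices with labelling in $M^{i-1}$; the parity count bootstraps at $i=1$ from the fact that $T^0$ is a single vertex whose label automatically lies in $M^0$.

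The parity assertion at level $i$ is proved by a door-counting argument in the spirit of Ky Fan. Call a pair $(\sigma,\tau)$ a \emph{door} if $\sigma \in T^i$ is an $i$-simplex, $\tau$ is one of its $(i-1)$-faces, and $\lambda(\tau) \in M^{i-1}_+$. Counting doors from the simplex side: the hypothesis that $T$ avoids $F$ places $\lambda(\sigma) \in L^i \setminus F$ for every $\sigma \in T^i$, and by the very definition of $M^i$ the number of doors in $\sigma$ is odd iff $\lambda(\sigma) \in M^i$; thus the total door count modulo $2$ equals the number of $i$-simplices in $T^i$ with labelling in $M^i$. Counting from the face side: each interior $(i-1)$-face lies in two $i$-simplices (contributing evenly), while each boundary $(i-1)$-face lies in exactly one, so the total modulo $2$ equals the number of boundary $(i-1)$-faces of $T^i$ carrying a label in $M^{i-1}_+$. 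The hemisphere-alignment hypothesis decomposes $\partial T^i = T^{i-1} \cup (-T^{i-1})$ with only $(i-2)$-dimensional overlap, and the antipodal labelling condition on the boundary converts a face of $-T^{i-1}$ with label in $M^{i-1}_+$ into a face of $T^{i-1}$ with label in $M^{i-1}_-$. Summing, the boundary door count equals the number of $(i-1)$-simplices in $T^{i-1}$ with labelling in $M^{i-1}_+ \cup M^{i-1}_- = M^{i-1}$, which is odd by the inductive hypothesis, establishing the parity claim at level $i$.

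To keep the induction going whenever $i < n$, I must also verify that the newly defined $M^i$ is strictly symmetric. Unwinding the definitions, this reduces to the combinatorial claim that for every $\ell \in L^i \setminus F$, the number of sub-$(i-1)$-faces of $\ell$ whose labelling lies in $M^{i-1}$ is even: given this, since each $\ell' \in M^{i-1}$ lies in exactly one of $M^{i-1}_+$ or $M^{i-1}_-$, the parity of sub-labellings of $\ell$ in $M^{i-1}_+$ agrees with that in $M^{i-1}_-$, and strict symmetry of $L^i \setminus F$ then forces $\ell \in M^i \iff -\ell \in M^i$.

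The main obstacle is this evenness claim. My plan is to derive it from the inductive strict symmetry of $M^{i-1}$ by an explicit pairing of the $M^{i-1}$-labelled sub-faces of $\ell$, combining the opposition involution on $M^{i-1}$ with the multiplicities of labels appearing in $\ell$; in small examples such a pairing manifests either as repeated sub-faces forced by label multiplicities or as pairs of sub-faces that differ by swapping a single label with its opposite. Should a direct combinatorial pairing prove elusive, a fallback is to apply the door-counting argument of the previous paragraph to an auxiliary antipodally-symmetric complex obtained by gluing $\ell$ and $-\ell$ (which are distinct since strict symmetry of $L^i \setminus F$ rules out $\ell = -\ell$), thereby reducing the evenness claim to a lower-dimensional instance of the same parity statement already supplied by the induction.
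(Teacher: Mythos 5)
Your door-counting argument for the parity assertion is essentially identical to the paper's, and you correctly isolate the crux of the strict-symmetry step: the claim that every $\ell^i \in L^i\setminus F$ contains an even number of $(i-1)$-sub-labellings from $M^{i-1}$. But you do not actually prove this claim, and the two routes you sketch both have problems. The explicit-pairing idea is unlikely to work in the form you describe: if $\ell^{i-1}\subset\ell^i$ lies in $M^{i-1}$, its opposite $-\ell^{i-1}$ is also in $M^{i-1}$ by strict symmetry, but $-\ell^{i-1}$ need not be a sub-multiset of $\ell^i$, so the opposition involution does not restrict to an involution on the sub-labellings of $\ell^i$; nor does ``swapping a single label with its opposite'' stay within $\ell^i$ unless $\ell^i$ happens to contain both labels. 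The fallback of gluing $\ell^i$ and $-\ell^i$ into an antipodally symmetric complex is also unconvincing: the boundary of an $i$-simplex with $i+1$ facets does not carry a free antipodal action, the disjoint union of two such boundaries is not the boundary of a ball, and the hemisphere-alignment structure the theorem needs would have to be manufactured by hand, so the ``reduce to a lower-dimensional instance'' step is not supplied by the existing induction.

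The paper closes the gap without any geometry and without any antipodal pairing. It drops down \emph{two} dimensional levels rather than one: writing $deg^{i-1}(\ell^i)$ for the number of $(i-1)$-sub-labellings of $\ell^i$ in $M^{i-1}$, one has
\[
deg^{i-1}(\ell^i) \;=\; \sum_{\ell^{i-1}\subset\ell^i} deg^{i-1}(\ell^{i-1})
\;\equiv\; \sum_{\ell^{i-1}\subset\ell^i} deg^{i-2}_+(\ell^{i-1})
\;=\; \sum_{\ell^{i-1}\subset\ell^i}\,\sum_{\ell^{i-2}\subset\ell^{i-1}} deg^{i-2}_+(\ell^{i-2})
\;=\; 2\!\!\sum_{\ell^{i-2}\subset\ell^i} deg^{i-2}_+(\ell^{i-2}) \;\equiv\; 0 \pmod 2,
\]
where the middle congruence uses the defining membership test for $M^{i-1}$ and the final equality is the purely combinatorial fact that each $(i-2)$-sub-labelling of $\ell^i$ lies in exactly two of its $(i-1)$-sub-labellings (with $i=1$ handled separately as a trivial base case). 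This is a double-counting identity on the face lattice of a simplex, not a pairing by opposition, and it is the step you would need to find to complete your argument.
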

\begin{proof}
First, we need some notation.  Assume $M^i_+\cup M^i_-$ is chosen, and $j\ge i$.
\begin{itemize}
\item For $j$-labelling $\ell^j$, let $deg^i(\ell^j)$ denote the number of $\ell^i\subseteq\ell^j$ such that $\ell^i\in M^i$.
\item If $j=i$, then $deg^i(\ell^j)$ is simply the indicator function for whether $\ell^j\in M^i$.
\item For $j$-simplex $\sigma_j$, let $deg^i(\sigma_j)$ denote the number of $i$-dimensional faces with a labelling in $M^i$.
\item Similarly define $deg^i_+(\ell^j)$, $deg^i_+(\sigma_j)$ for $M^i_+$, and $deg^i_-(\ell^j)$, $deg^i_-(\sigma_j)$ for $M^i_-$.
\end{itemize}

Suppose $i>0$.  To show $M^i$ is strictly symmetric, we first show that every $i$-labelling $\ell^i\in L^i\backslash F$ contains an even number of $(i-1)$-labellings from $M^{i-1}$.
\begin{eqnarray*}
deg^{i-1}(\ell^i) & \equiv & \sum_{\ell^{i-1}\subset\ell^i}deg^{i-1}(\ell^{i-1}) \\
& \equiv & \sum_{\ell^{i-1}\subset\ell^i}deg^{i-2}_+(\ell^{i-1}) \\
& \equiv & \sum_{\ell^{i-1}\subset\ell^i}\sum_{\ell^{i-2}\subset\ell^{i-1}}deg^{i-2}_+(\ell^{i-2}) \\
& \equiv & \sum_{\ell^{i-2}\subset\ell^i}2\cdot deg^{i-2}_+(\ell^{i-2}) \\
& \equiv & 0\mod{2} \\
\end{eqnarray*}
where all the equivalences follow from definitions except the fourth one, which follows from the fact that each $\ell^{i-2}\subset\ell^i$ is contained in exactly $2$ of the $\ell^{i-1}$'s.  (This proof doesn't make sense for $i=1$, but that case is trivial.)

Now, suppose $\ell^i\in M^i$.  $deg^{i-1}_+(\ell^i)$ is odd, so $deg^{i-1}_-(\ell^i)$ is also odd, since $deg^{i-1}(\ell^i)$ is even.  But $deg^{i-1}_-(\ell^i)=deg^{i-1}_+(-\ell^i)$, since $M^{i-1}$ is strictly symmetric.  Hence $deg^{i-1}_+(-\ell^i)$ is odd and $-\ell^i\in M^i$, so $M_i$ is symmetric.  Furthermore, $M_i$ is strictly symmetric since it is a subset of $L^i\backslash F$, which is strictly symmetric.

To show that $T^i$ contains an odd number of $i$-simplices with labellings from $M^i$:
\begin{eqnarray*}
\sum_{\sigma_i\in T^i}deg^i(\sigma_i) & \equiv & \sum_{\sigma_i\in T^i}deg^{i-1}_+(\sigma_i) \\
& \equiv & \sum_{\sigma_i\in T^i}\sum_{\sigma_{i-1}\subset\sigma_i}deg^{i-1}_+(\sigma_{i-1}) \\
& \equiv & \sum_{\sigma_{i-1}\in\partial T^i}deg^{i-1}_+(\sigma_{i-1})+\sum_{\sigma_{i-1}\in T^i\backslash\partial T^i}2\cdot deg^{i-1}_+(\sigma_{i-1}) \\
& \equiv & \sum_{\sigma_{i-1}\in T^{i-1}\cup-T^{i-1}}deg^{i-1}_+(\sigma_{i-1}) \\
& \equiv & \sum_{\sigma_{i-1}\in T^{i-1}}deg^{i-1}(\sigma_{i-1}) \\
& \equiv & 1\mod{2} \\
\end{eqnarray*}
where
\begin{itemize}
\item the third equivalence follows from the fact that each $\sigma_{i-1}$ on the boundary of $T^i$ is contained in exactly $1$ of the $\sigma_i$'s, while every $\sigma_{i-1}$ on the interior of $T^i$ is contained in exactly $2$ of the $\sigma_i$s;
\item the fourth equivalence follows from the hemispheres aligned with $T$;
\item the fifth equivalence follows from the fact that $\lambda$ is antipodal on the boundary;
\item the sixth equivalence follows from the inductive hypothesis.
\end{itemize}
This completes the induction and the proof.
\end{proof}
One may observe that we provided an elementary inductive proof of Theorem~\ref{parity_proof_framework}, similar in spirit to Fan's original combinatorial proof of his theorem.  However, it is also possible to establish this theorem using algebraic methods, following a scheme such as that used by Meunier\cite{Meu06a}.

It is not hard to see that, with the right parameters, this framework can be used to immediately establish Tucker's lemma:
\begin{theorem}\label{paritytucker}
Theorem~\ref{parity_proof_framework} directly implies Tucker's lemma.
\end{theorem}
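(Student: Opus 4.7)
The plan is to apply Theorem~\ref{parity_proof_framework} with the label set $\ext(\Diamond^n) = \{\pm e_i : 1 \leq i \leq n\}$ and the forbidden set $F$ consisting of every labelling that contains a complementary pair $\{+e_j,-e_j\}$. I first verify that $L^i \setminus F$ is strictly symmetric: a labelling $\ell$ equals $-\ell$ as a multi-set precisely when its labels close up into complementary pairs, i.e.\ precisely when $\ell \in F$, so each $\ell \notin F$ admits a distinct opposite. Since any complementary edge in $T$ is a $1$-face of some simplex whose labelling therefore lies in $F$, the framework's hypothesis ``$T$ contains no simplex with labelling in $F$'' is equivalent to Tucker's negated conclusion ``$T$ has no complementary edge''; assume the latter for contradiction. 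We may also assume $T$ is aligned with hemispheres, which is standard and can be arranged by refinement.

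Next I make the following partition choices. Set $M^0_+ := \{+e_i : 1 \leq i \leq n\}$, and inductively declare $M^i_+$ to consist of the \emph{Fan-alternating} $i$-labellings---multi-sets of the form $\{+e_{k_0},-e_{k_1},+e_{k_2},\ldots\}$ of length $i+1$ with $k_0 < k_1 < \cdots < k_i$. The central claim, proved by induction on $i$, is that $M^i$ then equals the union of Fan-alternating labellings and their opposites (the ``Anti-Fan-alternating'' labellings, in which every sign flips). The induction step is a parity count on slot-removed $(i-1)$-faces in the sense of the framework's proof: a Fan-alternating $i$-labelling has exactly one Fan-alternating $(i-1)$-face, obtained by removing its last element, and an Anti-Fan one has exactly one, obtained by removing its first element; every other non-forbidden $i$-labelling has its Fan-alternating $(i-1)$-faces cancelling in pairs---either because a repeated label creates two identical faces via the two slot-removals, or because the sign/index pattern rules out Fan-alternating faces altogether.

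At the top level $i = n$, a Fan- or Anti-Fan-alternating labelling of length $n+1$ would require $n+1$ strictly increasing indices drawn from $\{1,\ldots,n\}$, which is impossible by pigeonhole. Hence $M^n = \emptyset$, yet Theorem~\ref{parity_proof_framework} guarantees that $T^n = T$ contains an odd number of $n$-simplices with labellings in $M^n$, forcing $0 \equiv 1 \pmod 2$---the desired contradiction. The main obstacle is verifying the inductive characterization of $M^i$; this is the combinatorial heart of the argument, essentially recovering Fan's original parity analysis, with the most delicate sub-case being non-forbidden labellings containing repeated labels, where the slot-removal multiplicities must be tracked carefully.
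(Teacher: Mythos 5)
Your proposal is correct and follows essentially the same route as the paper: the same choice of $F$ (labellings containing a complementary pair), the same partition into Fan-alternating versus Anti-Fan-alternating multi-sets (the paper arrives at it via a ``smallest-index sign'' rule, but the resulting $M^i_+$ is identical), the same inductive characterization of $M^i$ as alternating labellings of length $i+1$ with strictly increasing indices, and the same pigeonhole conclusion that $M^n=\emptyset$. The only cosmetic difference is that you flesh out the parity-cancellation argument behind the inductive characterization, whereas the paper states it can ``be inductively observed'' without elaboration.
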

\begin{proof}
Theorem~\ref{parity_proof_framework} is true for any sequence of partitionings.  To prove Tucker's lemma, we must produce a sequence of partitionings such that $M^n=\emptyset$, contradicting the existence of a triangulation $T$ that doesn't contain a forbidden labelling.

Let $F$ be the set of labellings that contain a complementary edge; note that $L^i\backslash F$ is indeed symmetric for all $i$.  Suppose we had a triangulation $T$ that was a counterexample to Tucker's Lemma; $T$ satisfies the conditions of Theorem~\ref{parity_proof_framework}.  At each stage, use the following rule to partition $M^{i-1}$: given a labelling $\ell^{i-1}$, let $j$ be the smallest positive integer such that either $+j$ or $-j$ appears in $\ell^{i-1}$.  Since $\ell^{i-1}$ cannot contain a complementary edge, we cannot have both appear.  If $+j$ appears, then choose $\ell^{i-1}$ to be a `+' labelling; otherwise choose it to be `-'.  Note that this is a legal partitioning of $M^{i-1}$.  Now, for $i=1,\ldots,n$, it can be inductively observed that:
\begin{itemize}
\item $M^{i-1}_+=\{\{k_1,-k_2,k_3,\ldots,(-1)^{i-1}k_{i}\}:1\le k_1<\ldots<k_{i}\le n\}$
\item $M^{i-1}_-=\{\{-k_1,+k_2,-k_3,\ldots,(-1)^{i}k_{i}\}:1\le k_1<\ldots<k_{i}\le n\}$
\item $M^i=\{\{k_1,-k_2,k_3,\ldots,(-1)^{i}k_{i+1}\},\{-k_1,+k_2,-k_3,\ldots,(-1)^{i+1}k_{i+1}\}:1\le k_1<\ldots<k_{i+1}\le m\}$
\end{itemize}
Thus $M^n=\emptyset$, completing the proof.
\end{proof}

Theorem~\ref{parity_proof_framework} also yields a purely combinatorial proof of Theorem~\ref{thtucub}.
\begin{theorem}
Theorem~\ref{parity_proof_framework} directly implies Theorem~\ref{thtucub} [Tucker's lemma with cubical labels].
\end{theorem}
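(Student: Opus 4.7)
The plan is to apply Theorem~\ref{parity_proof_framework} in the same spirit as the proof of Theorem~\ref{paritytucker}: I will assume for contradiction that $T$ contains no neutral simplex and exhibit partitions $M^{i-1}=M^{i-1}_+\cup M^{i-1}_-$ at each level that force $M^n=\emptyset$, contradicting the framework's conclusion that $T^n$ must contain an odd number of $n$-simplices with labellings in $M^n$.

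I take $F$ to be the set of all neutral labellings, so $L^i\setminus F$ consists of labellings in which at least one coordinate is uniform across every label. This is a valid input to the framework: neutrality is preserved by negation (so $F$ is symmetric), and any self-opposite labelling must contain some pair $v,-v$ and hence be neutral (so $F$ contains every self-opposite labelling), giving strict symmetry of $L^i\setminus F$. For the partition rule, given $\ell\in M^{i-1}\subseteq L^{i-1}\setminus F$, I let $c(\ell)$ denote the smallest coordinate at which every label of $\ell$ agrees (well-defined since $\ell$ is not neutral), and put $\ell\in M^{i-1}_+$ precisely when the agreed value at coordinate $c(\ell)$ equals $+1$. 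Since negation preserves $c(\ell)$ while flipping its value, this is a legitimate antipodal partition.

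The key step is to prove by induction on $i$ that every $\ell\in M^i$ has coordinates $1,2,\ldots,i$ all neutral in $\ell$. Granting this at $i=n$ gives $M^n\subseteq F$, which together with $M^n\subseteq L^n\setminus F$ forces $M^n=\emptyset$ and finishes the argument. The base case $i=0$ is vacuous.

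The main obstacle lies in the inductive step. Given $\ell\in L^i\setminus F$ with some coordinate in $\{1,\ldots,i\}$ uniform, let $j$ be the smallest such; it suffices to show $deg^{i-1}_+(\ell)$ is even. If $j<i$, then coordinate $j$ is uniform in every $(i-1)$-face of $\ell$, so by the inductive hypothesis no face can lie in $M^{i-1}$ and $deg^{i-1}_+(\ell)=0$. The harder subcase is $j=i$: here coordinates $1,\ldots,i-1$ are neutral in $\ell$ while coordinate $i$ is uniform at some value $s\in\{+1,-1\}$, and every face $\ell'\in M^{i-1}$ inherits neutrality in coordinates $1,\ldots,i-1$ by induction, so $c(\ell')=i$ with agreed value $s$, placing $\ell'$ uniformly in $M^{i-1}_s$. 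Consequently $deg^{i-1}_+(\ell)$ equals either $0$ or $deg^{i-1}(\ell)$, both of which are even --- the latter by the parity calculation already carried out inside the proof of Theorem~\ref{parity_proof_framework}, which establishes $deg^{i-1}(\ell)\equiv 0\pmod 2$ for every $\ell\in L^i\setminus F$.
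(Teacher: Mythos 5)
Your proposal is correct and follows essentially the same strategy as the paper: the same choice of forbidden set $F$ (neutral labellings), the same partition rule (smallest coordinate on which all labels agree, with sign determining $+$ or $-$), and the same inductive invariant (every $\ell\in M^i$ has coordinates $1,\ldots,i$ all non-uniform, equivalently $\Phi(\ell^i)\ge i+1$ in the paper's notation), all concluding that $M^n$ would have to consist of neutral labellings and hence be empty. The only genuine difference is how the inductive step is organized: the paper works directly with $\ell^i\in M^i$, picks a face $\ell^{i-1}\in M^{i-1}_+$ to push the non-uniformity up to coordinate $i-1$, and then invokes the strict symmetry of $M^i$ (a stated conclusion of Theorem~\ref{parity_proof_framework}) to produce a second face in $M^{i-1}_-$, ruling out agreement on coordinate $i$. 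You instead argue the contrapositive with a case split on the smallest uniform coordinate $j$, and in the $j=i$ case conclude by observing that $\deg^{i-1}_+(\ell)$ is either $0$ or the full $\deg^{i-1}(\ell)$, the latter being even by a lemma proved \emph{inside} the proof of Theorem~\ref{parity_proof_framework} rather than in its statement. Both arguments are valid and of comparable length; the paper's is marginally cleaner in that it relies only on the stated conclusions of the framework theorem rather than on an internal parity lemma from its proof.
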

\begin{proof}
Let $F$ be the set of labellings that contain a neutral simplex.  Suppose we had a triangulation $T$ that was a counterexample to Theorem~\ref{thtucub}.  At each stage, use the following rule to partition $M^{i-1}$: given a labelling $\ell^{i-1}$, let $\Phi(\ell^{i-1})$ be the smallest coordinate on which all $i$ labels of $\ell^{i-1}$ agree.  Since $\ell^{i-1}$ cannot be a neutral simplex, this coordinate must exist.  If the labels are all positive on this coordinate, then choose $\ell^{i-1}$ to be a `+' labelling; otherwise choose it to be `-'.

For $i=0,\ldots,n$, we will inductively show that for all $\ell^i\in M^i$, $\Phi(\ell^i)\ge i+1$.  The base case $i=0$ is trivial.  Suppose $i>0$ and $\ell^i\in M^i$.  By definition, $\ell^i$ contains a labelling $\ell^{i-1}\in M^{i-1}_+$.  The induction hypothesis says $\Phi(\ell^{i-1})\ge i$.  Thus, since $\ell^i$ contains $\ell^{i-1}$, $\ell^i$ cannot have all labels agree until at least the $i$'th coordinate, and if they agree on the $i$'th coordinate, it must be on `+'.

Now, $M^i$ is strictly symmetric, so $-\ell^i\in M^i$, which means $-\ell^i$ contains an odd number of $(i-1)$-labellings from $M^{i-1}_+$.  But then $\ell^i$ contains an odd number of, i.e.\ at least one $(i-1)$-labellings from $M^{i-1}_-$, so we can draw a corresponding conclusion: $\ell^i$ cannot have all labels agree until at least the $i$'th coordinate, and if they agree on the $i$'th coordinate, it must be on `-'.

Combining these two conclusions, $\ell^i$ cannot have all labels agree on the $i$'th coordinate, so we get $\Phi(\ell^i)\ge i+1$, completing the induction.  When $i=n$, for all $\ell^n\in M^n$, $\Phi(\ell^n)\ge n+1$.  But this is impossible, so $M^n=\emptyset$, contradicting the fact that $T^n=T$ contains an odd number of labellings from $M^n$.
\end{proof}
Note that our framework allowed us to reach the conclusion without being able to characterize $M^i$ at each stage.  We leave it as an open question to come up with a combinatorial characterization of $M^i$.

\section{Topological Proof of Theorem~\ref{thspoctcub}}
Here, we provide a straightforward proof of Theorem~\ref{thspoctcub} (Octahedral Sperner with Cubical Labels) via the Brouwer fixed-point theorem.  We follow the same standard technique that is typically used in the proof of Sperner's Lemma \cite{Mat03}.
\newenvironment{appendixtheorem}[2][Theorem]{\begin{trivlist}\item[\hskip \labelsep {\bfseries #1}\hskip \labelsep {\bfseries #2}]}{\end{trivlist}}
\begin{appendixtheorem}{\ref{thspoctcub}}[Octahedral Sperner with Cubical Labels]
Let $T$ be a triangulation of $\Diamond^n$.  Let $\lambda:V(T)\rightarrow \ext(\square^n)$ be a label function with the property that for all vertices $x \in V(T)$, for all $v\in\ext(\square^n)$, if $v^T x=1$, then $\lambda(x)\neq-v$.  Then $T$ contains a neutral simplex.
\end{appendixtheorem}
\begin{proof}
Suppose for contradiction we had a $T$, $\lambda$ that was a counterexample to Theorem~\ref{thspoctcub}. Our goal is to construct a continuous function from $\Diamond^n$ to itself that has no fixed points, contradicting Brouwer's fixed-point theorem. Let $f:\Diamond^n\rightarrow(n\cdot\square^n)$ be the natural linear extension of $\lambda$ to the entire octahedron, except we dilate the codomain by a factor of $n$ (and instead of mapping to $(1,\ldots,1)$, we map to $(n,\ldots,n)$, etc.). We will want to consider $-f(x)$.

Now, fix an $x\in\partial\Diamond^n$. Define $C(x)=\{v\in\ext(\square^n):v^T x=1\}$. Since $x\in\partial\Diamond^n$, $C(x)\neq\emptyset$. We want to define the region of $\partial\Diamond^n$ to which $x$ belongs by a vector $S(x)\in\{-1,0,1\}^n$. $\forall i\in[n]$, define
\begin{itemize}
\item $S_i(x)=1$ if $\forall v\in C(x)$, $v_i=+1$
\item $S_i(x)=-1$ if $\forall v\in C(x)$, $v_i=-1$
\item $S_i(x)=0$ otherwise
\end{itemize}
There are $3^n-1$ different regions excluding the interior. The key geometric observation about $\Diamond^n$ is that if we know $S(x)$, then $C(x)$ is guaranteed to be the maximum of all subsets of $\ext(\square^n)$ that give rise to $S(x)$.

Now we need to define corresponding regions on $\partial(n\cdot\square^n)$. For $s=S(x)$, we want to define $R(s)$, the region of $\partial(n\cdot\square^n)$ corresponding to $s$. Note that these regions will overlap. Say $x\in R(s)$ if and only if for all $i\in[n]$:
\begin{itemize}
\item If $s_i=1$, then $x_i\ge n-1$
\item If $s_i=-1$, then $x_i\le -(n-1)$
\item If $s_i=0$, then $-(n-1)\le x_i\le n-1$
\end{itemize}

Now, take an $x\in\partial\Diamond^n$ and consider $s:S(x)$. Let $W(s)=\{t\in\ext(\square^n):t_i=s_i\ if\ |s_i|=1\}$. $-f(x)$ is a convex combination of at most $n-1$ vertices of $n\cdot\square^n$. Let's consider what these vertices can be. By the observation that $C(x)$ is maximal, none of these vertices can be those in $n\cdot W(s)$. Thus there exists some coordinate $i$ such that $|s_i|=1$ and at least one of these vertices ``disagrees'' with $s_i$. We conclude that
\begin{itemize}
\item If $s_i=1$, then $-f(x)\le\frac{n-1}{n}n+\frac{1}{n}(-n)<n-1$.
\item If $s_i=-1$, then $-f(x)\ge\frac{n-1}{n}(-n)+\frac{1}{n}(n)>-(n-1)$.
\end{itemize}
Thus, $-f(x)\notin R(x)$.

Now, let $g(x)$ be the continuous deformation that maps $n\cdot\square^n$ to $\Diamond^n$, and bijectively maps $R(x)$ to $R'(x):=\bigcap_{v\in C(x)}\{y:v^T y=1\}$ for all points $x\in\partial(\Diamond^n)$. Note that all the boundaries line up geometrically. This map is continuous, and this map can be continuously extended to the interiors.

Now consider the composition $g(-f(x)):\Diamond^n\rightarrow\Diamond^n$. Since there is no neutral simplex, $f(x)$ maps nothing to the interior and hence there can be no fixed points on the interior. If $x\in\partial\Diamond^n$, then $x\in R'(x)$, but $-f(x)\notin R(x)$. Then $g(-f(x))\notin R'(x)$, so $x$ cannot be a fixed point. We've constructed a continuous function from $\Diamond^n$ to itself with no fixed points, contradicting Brouwer's fixed-point theorem and completing the proof.
\end{proof}

{
\bibliographystyle{amsalpha}
\bibliography{topology}

\providecommand{\bysame}{\leavevmode\hbox to3em{\hrulefill}\thinspace}
\providecommand{\MR}{\relax\ifhmode\unskip\space\fi MR }
\providecommand{\MRhref}[2]{%
  \href{http://www.ams.org/mathscinet-getitem?mr=#1}{#2}
}
\providecommand{\href}[2]{#2}
\begin{thebibliography}{Meu06b}

\bibitem[Ata96]{Ata96}
K.~T. Atanassov, \emph{On {S}perner's lemma}, Studia Sci. Math. Hungar.
  \textbf{32} (1996), no.~1-2, 71--74.

\bibitem[CDT09]{Che09}
X.~Chen, X.~Deng, and S-H. Teng, \emph{Settling the complexity of computing
  two-player nash equilibria}, Journal of the ACM (JACM) \textbf{56} (2009),
  no.~3, 14.

\bibitem[Fan52]{Fan52}
K.~Fan, \emph{A generalization of {T}ucker's combinatorial lemma with
  topological applications}, Ann. of Math. (2) \textbf{56} (1952), 431--437.

\bibitem[Fre84]{Fre84b}
R.~M. Freund, \emph{Variable dimension complexes. {II}. {A} unified approach to
  some combinatorial lemmas in topology}, Math. Oper. Res. \textbf{9} (1984),
  no.~4, 498--509.

\bibitem[Fre89]{Fre89}
\bysame, \emph{Combinatorial analogs of brouwer's fixed-point theorem on a
  bounded polyhedron}, JCTB \textbf{47} (1989), no.~2, 192--219.

\bibitem[FT81]{FT81}
R.~M. Freund and M.~J. Todd, \emph{A constructive proof of {T}ucker's
  combinatorial lemma}, J. Combin. Theory Ser. A \textbf{30} (1981), no.~3,
  321--325.

\bibitem[Kuh60]{Kuh60}
H.~W. Kuhn, \emph{Some combinatorial lemmas in topology}, IBM J. Res. Develop.
  \textbf{4} (1960), 508--524.

\bibitem[Lee97]{Lee97}
C.~W. Lee, \emph{Subdivisions and triangulations of polytopes}, Handbook of
  discrete and computational geometry, CRC Press Ser. Discrete Math. Appl.,
  CRC, Boca Raton, FL, 1997, pp.~271--290.

\bibitem[LPS02]{LPS02}
J.~A.~De Loera, E.~Peterson, and F.~E. Su, \emph{A polytopal generalization of
  {S}perner's lemma}, J. Combin. Theory Ser. A \textbf{100} (2002), no.~1,
  1--26.

\bibitem[Mat03]{Mat03}
J.~Matou{\v{s}}ek, \emph{Using the {B}orsuk-{U}lam theorem}, Springer-Verlag,
  Berlin, 2003.

\bibitem[Meu06a]{Meu06}
F.~Meunier, \emph{Pleins etiquetages et configurations equilibrees : aspects
  topologiques de l'optimisation combinatoire}, Thesis (French) (2006).

\bibitem[Meu06b]{Meu06a}
Fr{\'e}d{\'e}ric Meunier, \emph{A zq-fan theorem}, Les Cahiers du Laboratoire
  Leibniz (2006).

\bibitem[NS12]{NS12}
K.~L. Nyman and F.~E. Su, \emph{A borsuk-ulam equivalent that directly implies
  sperner's lemma}, Unpublished Manuscript (2012).

\bibitem[PS05]{PS05}
T.~Prescott and F.~E. Su, \emph{A constructive proof of {K}y {F}an's
  generalization of {T}ucker's lemma}, J. Combin. Theory Ser. A \textbf{111}
  (2005), no.~2, 257--265.

\bibitem[Rah12]{Rah12}
M.~R. Rahman, \emph{Survey on topological methods in distributed computing}.

\bibitem[SS03]{SS03}
F.~W. Simmons and F.~E. Su, \emph{Consensus-halving via theorems of
  {B}orsuk-{U}lam and {T}ucker}, Math. Social Sci. \textbf{45} (2003), no.~1,
  15--25.

\bibitem[Su97]{Su97}
F.~E. Su, \emph{Borsuk-{U}lam implies {B}rouwer: a direct construction}, Amer.
  Math. Monthly \textbf{104} (1997), no.~9, 855--859.

\bibitem[Tuc46]{Tuc46}
A.~W. Tucker, \emph{Some topological properties of disk and sphere}, Proc.
  {F}irst {C}anadian {M}ath. {C}ongress, {M}ontreal, 1945, University of
  Toronto Press, 1946, pp.~285--309.

\bibitem[Yan09]{Yan09}
M.~Yannakakis, \emph{Equilibria, fixed points, and complexity classes},
  Computer Science Review \textbf{3} (2009), no.~2, 71--85.

\bibitem[{\v{Z}}iv10]{Ziv10}
Rade~T {\v{Z}}ivaljevi{\'c}, \emph{Oriented matroids and ky fan's theorem},
  Combinatorica \textbf{30} (2010), no.~4, 471--484.

\end{thebibliography}
}

\end{document}